\numberwithin{theorem}{section}
\def\norm#1{\left\|#1\right\|}
\renewcommand{\sp}[2]{\langle#1,#2\rangle}
\newcommand{\VEC}[2]{\left(\begin{array}{c} #1 \\ #2 \end{array}\right)}
\newcommand{\MATRIX}[4]{\left(\begin{array}{rr} #1 & #2 \\ #3 & #4 \end{array}\right)}
\newcommand{\R}{\mathbb R}
\newcommand{\N}{\mathbb N}
\newcommand{\A}{\mathbb A}
\newcommand{\mcP}{\mathcal{P}}
\newcommand{\mcV}{\mathcal{V}}
\newcommand{\curl}{\mbox{curl}}
\newcommand{\oO}{\overline{\Omega}}
\newcommand{\Ot}{\Omega_c}
\newcommand{\Ott}{\Omega_I}
\newcommand{\sigs}{\sigma^\star}
\newcommand{\Ca}{C^{0,\alpha}}
\newcommand{\Conea}{C^{1,\alpha}}
\newcommand{\cc}{\subset\subset}
\newcommand{\bn}{\mathbf{n}}
\newcommand{\MR}{\mathbf{r}}
\newcommand{\Bz}{Harmonic $B_z$ Algorithm\xspace}
\newcommand{\RBz}{Reduced Basis Harmonic $B_z$ Algorithm\xspace}
\newcommand{\Amplaw}{Amp\`{e}re's law\xspace}
\title{Magnet Resonance Electrical Impedance Tomography (MREIT): Convergence and Reduced Basis Approach\thanks{Submitted to the editors \today}}
\author{
  Dominik Garmatter\thanks{Department of Mathematics, Goethe University Frankfurt, Germany (\email{garmatter@math.uni-frankfurt.de}, \email{harrach@math.uni-frankfurt.de})}
  \and
  Bastian Harrach\footnotemark[2] % if the submitted is removed this has to be [1]
}
\begin{document}

\maketitle

\begin{abstract}
This article considers the inverse problem of Magnet resonance electrical impedance tomography (MREIT) in two dimensions. A rigorous mathematical framework for this inverse problem as well as the existing \Bz as a solution algorithm are presented. The convergence theory of this algorithm is extended, such that the usage an approximative forward solution of the underlying partial differential equation (PDE) in the algorithm is sufficient for convergence. Motivated by this result, a novel algorithm is developed where it is the aim to speed-up the existing \Bz. This is achieved by combining it with an adaptive variant of the reduced basis method, a model order reduction technique. In a numerical experiment a high-resolution image of the shepp-logan phantom is reconstructed and both algorithms are compared.
\end{abstract}

\begin{keywords}
MREIT; Image reconstruction; Convergence; Reduced basis method; Model order reduction; Adaptive space generation.
\end{keywords}

\begin{AMS}
35R30, 35R05, 65N21
\end{AMS}

\section{Introduction}
\label{sec:introduction_MREITpaper}

Magnet resonance electrical impedance tomography (MREIT) is an imaging modality developed over the course of the last three decades. In order to obtain data, surface electrodes are attached onto the imaging subject, e.g. the human body, whilst the object resides inside an MRI scanner. Injecting current through the electrodes then results in a change of the magnetic flux density $\textbf{B} = (B_x,B_y,B_z)$ inside the subject and the MRI scanner can detect this change in the magnetic field. The aim of the method is the determination of the electrical conductivity $\sigma$ of the imaging subject from this measured data. This paper deals with the the $B_z$-based MREIT approach, that is feasible in practice: it is assumed that only $B_z$ is available where the $z$-direction is the direction of the main magnetic field of the MRI scanner (earlier techniques utilized the whole magnetic field \textbf{B}, but cumbersome subject rotations are then necessary to acquire all three components). 
In order to solve the inverse problem of determining $\sigma$ from $B_z$, one can apply the well-known \emph{\Bz} which was proposed by Seo et al. \cite{HarmonicBz_mainpaper} and has since then been extensively studied, see, e.g., \cite{HarmonicBz_2003b, Kwon_Seo_Woo2006_2dMREIT_uniq, seo_woo_2008_MREIT, Jijun_Seo_Woo_MREITconv, Jijun_Seo_Woo_MREITerrest, seo_woo_2010_MREITcompilation} and the references therein.

The historical motivation for the development of MREIT techniques is the Electrical impedance tomography (EIT), see, e.g., \cite{cheney_isaacson_newell_1999EIT, newell1988electric, adler2011electrical, lionheart2004eit, uhlmann2009electrical} for a broad overview. EIT is known to be severely ill-posed and nonlinear such that the spatial resolution of a reconstruction is (usually) poor (on the other hand EIT shines with an excellent temporal resolution, cf. time-difference EIT in the above cited works). Consequently, improving the spatial resolution of conductivity images was the driving force for the development of MREIT techniques.

Before the contributions of this paper to the field are described, the basic setting and the key identity of the \Bz are recapitulated. Let the imaging subject reside in a bounded domain $\Omega\subset\R^3$ with two pairs of surface electrodes attached to it and $E_1^\pm,\, E_2^\pm$ denote the respective parts of $\partial\Omega$ where the electrodes are attached. Furthermore, let $\sigma\in C^1(\oO)$. Each of the two input currents $I_1,\,I_2$ (one per electrode pair) induces a magnetic field and the respective z-components $B_z^1,\, B_z^2$ are the observable data. The physical motivation for $B_z$-based MREIT is the implicit connection between the unknown conductivity $\sigma$ and the observable data $B_z^1,\, B_z^2$ via the \emph{Biot-Savart law:} for $j=1,2$, where $j$ specifies the active electrode pair throughout this article, and $\MR = (x,y,z)\in\Omega$ it is
\begin{align*}
B_z^j(\MR) = \frac{\mu_0}{4\pi}\int_\Omega\sigma(\MR)\frac{(x-x')\frac{\partial u_j^\sigma}{\partial y}(\MR') - (y-y')\frac{\partial u_j^\sigma}{\partial x}(\MR')}{\vert \MR-\MR'\vert^3}\mathrm{d}\MR',
\end{align*}
with $\mu_0$ the magnetic constant of the free space. $u_j^\sigma$ denotes the electrical potential that satisfies the \emph{shunt model}, i.e. 
\begin{align*}
\nabla\cdot(\sigma\nabla u_j^\sigma) = 0\quad\mbox{in}\quad \Omega\\
I_j = \int_{E_j^+}\sigma\frac{\partial u_j^\sigma}{\partial \mathbf{n}} \mathrm{d}s = - \int_{E_j^-} \sigma\frac{\partial u_j^\sigma}{\partial\mathbf{n}} \mathrm{d}s,\quad 
\nabla u_j^\sigma \times \mathbf{n} = 0,\quad\mbox{on}\quad E_j^+\cup E_j^-,\\
\sigma\frac{\partial u_j^\sigma}{\partial\mathbf{n}} = 0\quad\mbox{on}\quad\partial\Omega\setminus\overline{\left(E_j^+\cup E_j^-\right)},
\end{align*}
where $\mathbf{n}$ denotes the outward unit normal vector and $\times$ denotes the cross product.
The \Bz is then an iteration based upon the following identity (here in the logarithmic formulation), which is obtained by applying the $\curl$-operator on both sides of \Amplaw:
\begin{align}
\label{eq:maxwell_relation_MREITpaper}
\nabla_{xy} \ln\sigma = \frac{1}{\mu_0}(\sigma\A[\sigma])^{-1}\VEC{\nabla^2 B_z^1}{\nabla^2 B_z^2},\quad\mbox{with}\quad\A[\sigma] = \MATRIX{\frac{\partial u_1}{\partial y}}{-\frac{\partial u_1}{\partial x}}{\frac{\partial u_2}{\partial y}}{-\frac{\partial u_2}{\partial x}},
\end{align}
where $\nabla^2$ always denotes the Laplace operator throughout this article and $\nabla_{xy}$ is the gradient in $x$ and $y$ direction.

For locally cylindrical subjects with a conductivity that is hardly changing alongside the $z$-direction, the corresponding MREIT problem can entirely be formulated in two space dimensions, see, e.g., \cite{Jijun_Seo_Woo_MREITconv, Jijun_Seo_Woo_MREITerrest}. This paper will consider this two-dimensional MREIT problem, which is feasible in practice for the limbs and the thorax of the human body.

The contribution of this paper to the field is the following: although there have been many advanced numerical studies in MREIT, the convergence analysis did so far only consider the idealized case in which the exact forward solution $u_j^\sigma$ is available for the \Bz. Of course, this is not the case in a numerical study (where for instance only a finite element approximation is available), such that numerical convergence of the algorithm remains an open question.
This paper provides a rigorous and complete mathematical framework as well as a convergence result for the inverse problem in question. The convergence result is based on and at the same time an extension of the existing convergence theory, see \cite{Jijun_Seo_Woo_MREITconv, Jijun_Seo_Woo_MREITerrest}, such that the usage of an approximation of $u_j^\sigma$ in the \Bz is sufficient for convergence. As a consequence, actual numerical convergence of the algorithm is achieved. Furthermore, the potential to use an approximative forward solution instead of the exact one opens up the possibility of combining the existing \Bz with model order reduction techniques in order to develop novel algorithms that retain the accuracy in the reconstruction but are computationally faster. 
The reduced basis method, see, e.g., \cite{RB_master_paper, Ha14RBTut} for a general survey, as a model order reduction technique is presented, where the main task of the method is the construction of a low-dimensional reduced basis space, e.g. via snapshots that are forward solutions for relevant parameters, followed by Galerkin projection onto this space.
This novel algorithm will utilize an \emph{adaptive} reduced basis approach: new parameter values for the enrichment of the reduced basis space are found by projecting the inversion algorithm on it and iterating this projected algorithm. By alternatively updating the reduced basis space and reprojecting  the inversion algorithm onto it, the solution of the inverse problem and the construction of the reduced basis space are achieved simultaneously. 
This adaptive approach was outlined for the nonlinear Landweber method in \cite{G_Harrach_Haasdonk} and is based on ideas developed in \cite{Druskin_Zaslavski, cuiWilcox2014datadriveninversion, lass2014PHDThesis, zahr2014progressive}.

The remainder of this paper is organizes as follows. In section \ref{sec:MREIT_MREITpaper} the  forward and inverse problem in question are presented. Required results for the convergence theorem are derived and the theorem itself is proven. Section \ref{sec:RBM_and_MREIT_MREITpaper} contains a short presentation of the reduced basis method as well as the development of the novel algorithm including numerical results. Conclusions are made in section \ref{sec:conclusion_MREITpaper}.

\section{Magnet Resonance Electrical Impedance Tomography (MREIT)}
\label{sec:MREIT_MREITpaper}

This section provides the mathematical framework, i.e. the forward and the inverse problem of MREIT, the solution algorithms for the inverse problem and the convergence theorem including various minor results. As mentioned in the introduction the focus of this paper is a convergence analysis for the two-dimensional MREIT problem (and in section \ref{sec:RBM_and_MREIT_MREITpaper} the speed-up of the \Bz), such that the mathematical setting will be chosen accordingly. We refer to \cite{seo_woo_2010_MREITcompilation, HarmonicBz_mainpaper, Jijun_Seo_Woo_MREITconv} for a detailed motivation as well as an overview of the MREIT problem.

\subsection{Problem formulation}

For the remainder of this article, let $\Ot\cc\Ott\cc\Omega\subset\R^2$ be $\Conea$-domains with $\alpha\in (0,1)$, $E_1^\pm,\, E_2^\pm$ denote the respective parts of $\partial\Omega$ where the electrodes are attached and $I_1,\,I_2$ be the input currents corresponding to the electrodes, see figure \ref{fig:domains_MREITpaper} for an exemplary setting of an electrode configuration and the domains. 
Later on, $\Ot$ will serve as contrast domain, where the unknown true conductivity is allowed to change from a constant background. $\Ott$ will be an intermediate domain between $\Ot$ and $\Omega$, which will be necessary for various theoretical arguments throughout this article.

\begin{figure}[ht]
\label{fig:domains_MREITpaper}
\begin{center}
\begin{tikzpicture}
% Draw ticklines for Electrodes
\foreach \ang in {7,9,15,17,23,25,31,1} {
  \draw (\ang * 180 / 16:2.4) -- (\ang * 180 / 16:2.6);
} 
% Draw Electrode nodes
\node [left] at (-2.5,0) {$E_1^-$};
\node [right] at (2.5,0) {$E_1^+$};
\node [above] at (0,2.5) {$E_2^+$};
\node [below] at (0,-2.5) {$E_2^-$};
% Draw inner Domain with label
\draw plot [smooth cycle] coordinates {(-1.5,-0.5) (-2,1) (-1,1.4) (0,2) (1,1.5) (1.3,1) (1.3,-0.5) (0,-1.25)} node at (0,0) {$\Ot$};
% Draw intermediate domain with label
\draw plot [smooth cycle] coordinates {(-1.7,-0.7) (-2.2,1) (-1.5,1.7) (0,2.3) (1.5,1.5) (1.7,0.3) (1.5,-1) (-0.5,-2)} node at (-0.5,-1.6) {$\Ott$};
% Draw outer circular domain with label
\draw (0,0) circle (2.5) node at (0.9,-2) {$\Omega$};
\end{tikzpicture}
\end{center}
\caption{Exemplary setting of the domains $\Ot\cc\Ott\cc\Omega\subset\R^2$ and attached electrodes $E_j^\pm$, $j=1,2$.}
\end{figure}

We consider the parameter space 
\begin{align*}
\mcP := \{ \sigma\in\Conea(\oO)\mid \sigma(x) > 0,\, x\in\oO\}
\end{align*}
and want to stress that this rather restrictive choice is made with sight on the convergence theory to be developed in sections \ref{subsec:IP_MREITpaper} \& \ref{subsec:conv_RBZ_MREITpaper}.

For the sake of completeness we include a proof of the fact, that a solution of the shunt model can be obtained as a scaled solution of a standard boundary value problem, see, e.g., \cite[lemma 2.1]{Jijun_Seo_Woo_MREITconv}.

\begin{lemma}
\label{lemma:fp_MREITpaper}
For $\sigma\in\mcP$, let $u_j^\sigma$ fulfill
\begin{subequations}
\label{eq:forward_problem_MREITpaper}
\begin{align}
\label{eq:forward_problem_PDE_MREITpaper}
\nabla\cdot(\sigma\nabla u_j^\sigma) = 0\quad\mbox{ in }\Omega \\
u_j^\sigma\vert_{E_j^+} = 1,\quad u_j^\sigma\vert_{E_j^-} = 0\\
\label{eq:forward_problem_NBnD_MREITpaper}
\sigma\nabla u_j^\sigma\cdot \mathbf{n} = 0\quad\mbox{ on }\partial\Omega\backslash \overline{E_j^+\cup E_j^-},
\end{align}
\end{subequations}
where $j=1,2$ specifies the active electrode pair $E_j^\pm$ and corresponding input current $I_j$. Then,
\begin{align}
\label{eq:scaled_Fsolve_MREITpaper}
\tilde{u}_j^\sigma = \frac{I_j}{\int_{E_j^+}\sigma\frac{\partial u_j^\sigma}{\partial\bn}\mathrm{d}s}u_j^\sigma
\end{align}
is a solution of the two-dimensional shunt model
\begin{subequations}
\label{eq:shunt_model_MREITpaper}
\begin{align}
\nabla\cdot(\sigma\nabla u) = 0\quad\mbox{in}\quad \Omega\\
I_j = \int_{E_j^+}\sigma\frac{\partial u}{\partial \bn} \mathrm{d}s = - \int_{E_j^-} \sigma\frac{\partial u}{\partial\bn} \mathrm{d}s,\quad 
\nabla u \times \bn = 0,\quad\mbox{on}\quad E_j^+\cup E_j^-,\\
\sigma\frac{\partial u}{\partial\bn} = 0\quad\mbox{on}\quad\partial\Omega\setminus\overline{\left(E_j^+\cup E_j^-\right)}.
\end{align}
\end{subequations}
\end{lemma}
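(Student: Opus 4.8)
The plan is to use linearity of the conductivity equation together with the weak formulation of \eqref{eq:forward_problem_MREITpaper}, so that the whole statement reduces to bookkeeping on boundary terms. Write $\tilde u_j^\sigma = c_j\, u_j^\sigma$ with the scalar $c_j := I_j\big/\int_{E_j^+}\sigma\,\frac{\partial u_j^\sigma}{\partial\bn}\,\mathrm ds$. First I would note that, being a constant multiple of $u_j^\sigma$, the function $\tilde u_j^\sigma$ automatically satisfies $\nabla\cdot(\sigma\nabla\tilde u_j^\sigma)=0$ in $\Omega$ and the homogeneous Neumann condition $\sigma\nabla\tilde u_j^\sigma\cdot\bn=0$ on $\partial\Omega\setminus\overline{E_j^+\cup E_j^-}$. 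Since $u_j^\sigma$, hence $\tilde u_j^\sigma$, is constant on each electrode arc $E_j^\pm$, its tangential derivative there vanishes, which is precisely $\nabla\tilde u_j^\sigma\times\bn=0$ on $E_j^+\cup E_j^-$. Thus the only parts of the shunt model \eqref{eq:shunt_model_MREITpaper} that still require an argument are the two current identities.

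For $I_j=\int_{E_j^+}\sigma\,\frac{\partial\tilde u_j^\sigma}{\partial\bn}\,\mathrm ds$ I would simply insert the definition of $c_j$. For $\int_{E_j^+}\sigma\,\frac{\partial\tilde u_j^\sigma}{\partial\bn}\,\mathrm ds=-\int_{E_j^-}\sigma\,\frac{\partial\tilde u_j^\sigma}{\partial\bn}\,\mathrm ds$ I would integrate $\nabla\cdot(\sigma\nabla u_j^\sigma)=0$ over $\Omega$ and apply the divergence theorem: the boundary integral splits into contributions over $E_j^+$, $E_j^-$ and $\partial\Omega\setminus\overline{E_j^+\cup E_j^-}$, the last of which vanishes by \eqref{eq:forward_problem_NBnD_MREITpaper}, leaving $\int_{E_j^+}\sigma\,\frac{\partial u_j^\sigma}{\partial\bn}\,\mathrm ds=-\int_{E_j^-}\sigma\,\frac{\partial u_j^\sigma}{\partial\bn}\,\mathrm ds$; multiplying by $c_j$ closes it.

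It remains to check that $c_j$ is well defined, i.e.\ that its denominator is nonzero. Here I would invoke Green's first identity (equivalently, test the weak formulation of \eqref{eq:forward_problem_MREITpaper}) with $u_j^\sigma$ itself to obtain $\int_\Omega\sigma|\nabla u_j^\sigma|^2\,\mathrm dx=\int_{\partial\Omega}\sigma\, u_j^\sigma\,\frac{\partial u_j^\sigma}{\partial\bn}\,\mathrm ds=\int_{E_j^+}\sigma\,\frac{\partial u_j^\sigma}{\partial\bn}\,\mathrm ds$, since $u_j^\sigma=0$ on $E_j^-$ and $\sigma\,\frac{\partial u_j^\sigma}{\partial\bn}=0$ on the remainder of $\partial\Omega$. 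Because $\sigma>0$ on $\oO$ and $u_j^\sigma$ is non-constant (it equals $1$ on $E_j^+$ and $0$ on $E_j^-$), the left-hand side is strictly positive, so the denominator of $c_j$ is in fact positive; in particular $c_j$ is well defined and carries the sign of $I_j$.

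The actual obstacle is not the algebra above but the low boundary regularity of $u_j^\sigma$: problem \eqref{eq:forward_problem_MREITpaper} has mixed Dirichlet--Neumann data, so $u_j^\sigma$ generically loses regularity near the electrode edges $\partial E_j^\pm$, and the normal flux $\sigma\,\frac{\partial u_j^\sigma}{\partial\bn}$ on $E_j^\pm$ must be interpreted in a suitable weak (distributional) trace sense. One therefore has to ensure that the integrals $\int_{E_j^\pm}\sigma\,\frac{\partial u_j^\sigma}{\partial\bn}\,\mathrm ds$ are meaningful and that the divergence theorem and Green's identity used above are legitimate in that setting — e.g.\ by splitting off a fixed lift of the Dirichlet data and arguing by density in the appropriate function space. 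Granting the standard well-posedness and trace theory for this mixed boundary value problem, as in \cite[lemma 2.1]{Jijun_Seo_Woo_MREITconv}, the argument goes through as sketched.
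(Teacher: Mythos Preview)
Your proposal is correct and follows essentially the same approach as the paper: the key step in both is to integrate $\nabla\cdot(\sigma\nabla u_j^\sigma)=0$ against a constant test function (the paper phrases this as taking $w\equiv 1$ in the weak formulation, you phrase it as the divergence theorem) to obtain the current balance $\int_{E_j^+}\sigma\,\partial_{\bn}u_j^\sigma\,\mathrm ds=-\int_{E_j^-}\sigma\,\partial_{\bn}u_j^\sigma\,\mathrm ds$. You are in fact more thorough than the paper, which does not verify that the denominator of $c_j$ is nonzero, does not address the tangential condition $\nabla u\times\bn=0$, and handles the trace regularity only by the one-line remark that $\sigma\,\partial_{\bn}u_j^\sigma\in H^{-1/2}(\partial\Omega)$.
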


\begin{proof}
$u_j^\sigma$ as a solution of \eqref{eq:forward_problem_MREITpaper} has Neumann boundary values $\sigma\frac{\partial u_j^\sigma}{\partial \bn}\in H^{-1/2}(\partial\Omega)$ such that
\begin{align*}
\int_\Omega \sigma \nabla u_j^\sigma \cdot \nabla w \mathrm{d}x = \int_{\partial \Omega} \sigma\frac{\partial u_j^\sigma}{\partial\bn} w \mathrm{d}s
\end{align*}
holds for all $w\in H^1(\Omega)$ and choosing $w \equiv 1$ as well as utilizing \eqref{eq:forward_problem_NBnD_MREITpaper} yields
\begin{align*}
\int_{E_j^+}\sigma\frac{\partial u_j^\sigma}{\partial\bn} \mathrm{d}s = -\int_{E_j^-}\sigma\frac{\partial u_j^\sigma}{\partial \bn}\mathrm{d}s.
\end{align*}
As a consequence, $\tilde{u}_j^\sigma = \frac{I_j}{\int_{E_j^+}\sigma\frac{\partial u_j^\sigma}{\partial\bn}\mathrm{d}s}u_j^\sigma$ fulfills the shunt model \eqref{eq:shunt_model_MREITpaper}.
\end{proof}

\begin{remark}
\label{rem:forward_problem_MREITpaper}
\begin{enumerate}[(i)]
\item Finding a solution $u_j^\sigma$ of \eqref{eq:forward_problem_MREITpaper} is equivalent to finding 
\begin{align*}
u\in H_{D_j}^1(\Omega):= \{u\in H^1(\Omega)\mid u\vert_{E_j^+} = 1,\, u\vert_{E_j^-} = 0\}
\end{align*}
 solving
\begin{subequations}
\label{eq:forward_problem_weakform_MREITpaper}
\begin{align}
b(u,v;\sigma) &= f(v),\quad\mbox{for all}\quad v\in H_0^1(\Omega),\\
b(u,v;\sigma) &:= \int_\Omega \sigma\nabla u \cdot \nabla v dx,\quad f(v) := 0.
\end{align}
\end{subequations}
Since $\sigma\in \mcP$, existence and uniqueness of a solution of \eqref{eq:forward_problem_weakform_MREITpaper} and therefore \eqref{eq:forward_problem_MREITpaper} follow via the Lax-Milgram theorem.
\item It is well known \cite{somersalo_cheney_isaacson_1992ExEindElectrodemodels} that the shunt model \eqref{eq:shunt_model_MREITpaper} omits a (up to an additive constant) unique solution. Therefore, the gradient of a solution of \eqref{eq:shunt_model_MREITpaper} is uniquely determined and equivalent to the gradient of \eqref{eq:scaled_Fsolve_MREITpaper}.
\item Whenever we refer to a solution of \eqref{eq:forward_problem_MREITpaper}, we refer to the scaled solution via \eqref{eq:scaled_Fsolve_MREITpaper} and will not write $\tilde{u}$ but $u$.
\end{enumerate}
\end{remark}
Before we formulate the inverse problem in the upcoming section, we gather various known regularity results and estimates for the solutions of mixed boundary value problems in the following lemma, see \cite[lemma 3.1]{Jijun_Seo_Woo_MREITerrest}. It is easy to see, that the general problem \eqref{eq:Lemma_proof_PDE_MREITpaper} in the upcoming lemma covers the forward problem \eqref{eq:forward_problem_MREITpaper}.
\begin{lemma}
\label{lemma:Lemma_proof_PDE_MREITpaper}
Denote by $\Gamma$ any relatively open $\Conea$-portion of $\partial\Omega$. For the boundary value problem
\begin{subequations}
\label{eq:Lemma_proof_PDE_MREITpaper}
\begin{align}
\nabla\cdot(\sigma\nabla u) &= \sigma g\quad\mbox{ in }\Omega \\
u\vert_\Gamma &= h \quad\mbox{ on }\Gamma\\
-\sigma\nabla u\cdot \mathbf{n} &= 0\quad\mbox{ on }\partial\Omega\backslash\overline{\Gamma},
\end{align}
\end{subequations}
with $\sigma\in \mcP$, and $g\in L^2(\Omega)$, $h\in H^{1/2}(\Gamma)$, it is $u\in H^2(\Ott)\cap H^1(\Omega)$.
\begin{enumerate}[(a)]
\item If $h=0$, the following estimates hold
\begin{align}
\label{eq:Lemma_proof_est3_MREITpaper}
\norm{u}_{H^2(\Ott)}&\leq C_1(\sigma)(\norm{u}_{L^2(\Omega)} + \norm{g}_{L^2(\Omega)}),\\
\label{eq:Lemma_proof_est4_MREITpaper}
\norm{u}_{H^1(\Omega)}&\leq C_2(\sigma) \norm{g}_{L^2(\Omega)}.
\end{align}
\item If $g\in C(\Omega)$, then $u\in C^{1,\alpha}(\Omega)$ with $\alpha\in (0,1)$ and 
\begin{align}
\label{eq:Lemma_proof_est5_MREITpaper}
\norm{\nabla u}_{C^{0,\alpha}(\Ot)}\leq C_3(\sigma)(\norm{u}_{C^{0,\alpha}(\Ott)} + \norm{g}_{C(\Ott)}).
\end{align}
\end{enumerate}
The functions $C_1(\sigma),\,C_2(\sigma),\,C_3(\sigma)$ are known bounded functions w.r.t. $\norm{\nabla\ln \sigma}_{C(\Omega)}$.
\end{lemma}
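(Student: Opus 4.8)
The plan is to reduce all assertions to standard \emph{interior} estimates for the Laplacian. Since $\sigma\in\mcP$ is continuous and strictly positive on the compact set $\oO$, we have $\ln\sigma\in\Conea(\oO)$, and dividing the equation in \eqref{eq:Lemma_proof_PDE_MREITpaper} by $\sigma$ turns it into the perturbed Laplace equation
\[
\nabla^2 u = g - \nabla\ln\sigma\cdot\nabla u\quad\mbox{ in }\Omega,
\]
with the unchanged boundary conditions $u|_\Gamma=h$ and $\partial u/\partial\bn=0$ on $\partial\Omega\setminus\overline{\Gamma}$ (using $\sigma>0$ to drop $\sigma$ from the Neumann condition). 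The principal part is now the $\sigma$-independent Laplacian and the entire dependence on $\sigma$ sits in the first-order term through the bounded coefficient $\nabla\ln\sigma\in\Ca(\oO)$; this is the structural reason why the constants $C_1,C_2,C_3$ can be taken bounded in terms of $\norm{\nabla\ln\sigma}_{C(\Omega)}$ and the fixed geometry $\Ot\cc\Ott\cc\Omega$ only.

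For part (a), I would first establish $H^1(\Omega)$-well-posedness: subtracting an $H^1(\Omega)$-extension of $h\in H^{1/2}(\Gamma)$ and applying Lax--Milgram to $b(\cdot,\cdot;\sigma)=\int_\Omega\sigma\nabla u\cdot\nabla v\,dx$ on $\{v\in H^1(\Omega):v|_\Gamma=0\}$ (Poincar\'e is available since $\Gamma$ has positive surface measure, and coercivity/continuity are governed by $\inf_\Omega\sigma$ and $\sup_\Omega\sigma$) yields the unique solution $u\in H^1(\Omega)$. For $h=0$, testing with $u$ gives $\inf_\Omega\sigma\,\norm{\nabla u}_{L^2(\Omega)}^2\le\sup_\Omega\sigma\,\norm{g}_{L^2(\Omega)}\norm{u}_{L^2(\Omega)}$, hence \eqref{eq:Lemma_proof_est4_MREITpaper} with $C_2(\sigma)$ depending only on the Poincar\'e constant of $\Omega$ and on the ratio $\sup_\Omega\sigma/\inf_\Omega\sigma\le\exp(\mathrm{osc}_\Omega\ln\sigma)$, which for the fixed bounded connected domain $\Omega$ is bounded in terms of $\norm{\nabla\ln\sigma}_{C(\Omega)}$. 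For $u\in H^2(\Ott)$ and \eqref{eq:Lemma_proof_est3_MREITpaper} I would then invoke interior $H^2$-regularity for the Laplacian: choosing $\Ott\cc\Omega'\cc\Omega$ and noting $g-\nabla\ln\sigma\cdot\nabla u\in L^2(\Omega)$, the classical estimate $\norm{u}_{H^2(\Ott)}\le C(\Ott,\Omega')(\norm{u}_{L^2(\Omega')}+\norm{\nabla^2u}_{L^2(\Omega')})$ together with $\norm{\nabla\ln\sigma\cdot\nabla u}_{L^2}\le\norm{\nabla\ln\sigma}_{C(\Omega)}\norm{\nabla u}_{L^2}$ and \eqref{eq:Lemma_proof_est4_MREITpaper} gives the claim; $C(\Ott,\Omega')$ is $\sigma$-independent, so $C_1(\sigma)$ again depends on $\sigma$ only through $\norm{\nabla\ln\sigma}_{C(\Omega)}$. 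Note that the mixed (and partly degenerate) boundary conditions play no role here because $\Ott\cc\Omega$; they only enter the $H^1$-step.

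For part (b) the plan is a standard bootstrap along a finite nested chain $\Ot=U_0\cc U_1\cc U_2\cc U_3=\Ott$. Interior $H^2$-regularity gives $u\in H^2(U_2)$, hence $\nabla u\in H^1(U_2)\hookrightarrow L^q(U_2)$ for every $q<\infty$ (Sobolev embedding in two dimensions); since $g\in C(\Omega)$, the right-hand side $g-\nabla\ln\sigma\cdot\nabla u$ lies in $L^q(U_2)$, so interior $W^{2,q}$-regularity for the Laplacian gives $u\in W^{2,q}(U_1)$ for every $q<\infty$; Morrey's embedding $W^{2,q}(U_1)\hookrightarrow\Conea(\overline{U_1})$ for $q>2/(1-\alpha)$ (which covers every $\alpha\in(0,1)$) yields $u\in\Conea$, and running the same argument along an exhaustion of $\Omega$ gives $u\in\Conea(\Omega)$. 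Chaining the quantitative versions of these estimates --- each Laplacian constant being $\sigma$-independent and the first-order term contributing only $\norm{\nabla\ln\sigma}_{C(\Omega)}$ --- and bounding the lowest-order terms by $\norm{u}_{L^q(\Ott)}\le C\norm{u}_{\Ca(\Ott)}$ and $\norm{g}_{L^q(\Ott)}\le|\Ott|^{1/q}\norm{g}_{C(\Ott)}$ produces \eqref{eq:Lemma_proof_est5_MREITpaper} with $C_3(\sigma)$ of the claimed type; this essentially reproduces \cite[lemma~3.1]{Jijun_Seo_Woo_MREITerrest}.

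The main obstacle I anticipate is not conceptual but the bookkeeping of constants through the part-(b) bootstrap: one has to check at each step that the constant depends on $\sigma$ only via $\norm{\nabla\ln\sigma}_{C(\Omega)}$, which works precisely because of the reduction to ``Laplacian plus a first-order term with coefficient $\nabla\ln\sigma$''. A secondary subtlety is that $g$ is merely continuous, not H\"older, so interior Schauder theory cannot be used directly and the regularity has to be gained via $L^p$-theory and Sobolev/Morrey embeddings as above; this is also why the conclusion is $\Conea(\Omega)$ and not $C^{2,\alpha}$.
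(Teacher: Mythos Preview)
Your argument is correct. For part~(a) you essentially reproduce the paper's proof: Lax--Milgram for the $H^1$-bound and interior $H^2$-regularity for \eqref{eq:Lemma_proof_est3_MREITpaper}, with the useful observation that writing the equation as $\nabla^2 u = g - \nabla\ln\sigma\cdot\nabla u$ makes the dependence of the constants on $\norm{\nabla\ln\sigma}_{C(\Omega)}$ transparent. The paper simply cites \cite[thm.~8.8]{gilbarg_trudinger_PDEs}, which already allows variable coefficients, so the reduction to the Laplacian is not strictly needed there but does no harm.

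For part~(b) the routes genuinely differ. The paper invokes the divergence-form $C^{1,\alpha}$ theory directly: \cite[cor.~8.36]{gilbarg_trudinger_PDEs} gives $u\in\Conea(\Omega)$, and \cite[thm.~8.32]{gilbarg_trudinger_PDEs} (interior $C^{1,\alpha}$ estimate for operators with $\Ca$ principal coefficients and $L^\infty$ right-hand side, applied on $\Ot\cc\Ott$) yields \eqref{eq:Lemma_proof_est5_MREITpaper} in one stroke. You instead bootstrap through $H^2\to W^{2,q}\to\Conea$ via Calder\'on--Zygmund $L^p$-estimates for the Laplacian and Morrey's embedding. Your route is longer and requires tracking constants through several steps, but it is self-contained and sidesteps the De Giorgi--Nash--Moser machinery implicit in the paper's citations; it also makes explicit why continuity of $g$ (rather than H\"older continuity) suffices, since you never invoke classical Schauder theory. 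The paper's route is shorter precisely because Theorem~8.32 already absorbs that bootstrap internally.
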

\begin{proof}
$u\in H^2(\Ott)\cap H^1(\Omega)$ and \eqref{eq:Lemma_proof_est3_MREITpaper} are direct consequences of \cite[thm. 8.8]{gilbarg_trudinger_PDEs}. 
\eqref{eq:Lemma_proof_est4_MREITpaper} can be obtained via the coercivity of the bilinearform and the continuity of the linearform of the variational problem associated with \eqref{eq:Lemma_proof_PDE_MREITpaper}. Finally, $u\in C^{1,\alpha}(\Omega)$ is obtained by \cite[cor. 8.36]{gilbarg_trudinger_PDEs} and the estimate \eqref{eq:Lemma_proof_est5_MREITpaper} is generated by \cite[thm. 8.32]{gilbarg_trudinger_PDEs} applied to $\Ot\subset\subset\Ott$.
\end{proof}
Do note, that all strong norms in this article , e.g. $\norm{\cdot}_{C(\Omega)}$ or $\norm{\cdot}_{\Ca(\Omega)}$, are always with respect to the closure of the specified domain.

\subsection{Inverse problem and properties}
\label{subsec:IP_MREITpaper}

For the remainder of this article, we assume that the unknown target conductivity $\sigs$ fulfills $\sigs\in \mcP$ with $\sigs\mid_{\oO\setminus\Ot} = \sigma_b$, where $\sigma_b >0$ is a known constant, and that the associated data sets $B_{z,\star}^1,\,B_{z,\star}^2$ are available and fulfill \eqref{eq:maxwell_relation_MREITpaper}, i.e.
\begin{align}
\label{eq:IP_data_MREITpaper}
\nabla^2 B_{z,\star}^j = \mu_0 \left(\frac{\partial \sigs}{\partial x} \frac{\partial u_j^\star}{\partial y} - \frac{\partial \sigs}{\partial y} \frac{\partial u_j^\star}{\partial x}\right),\quad j=1,2,
\end{align}
holds in a point-wise sense inside $\Omega$, where $u_j^\star$ denotes the solutions of \eqref{eq:forward_problem_MREITpaper} for $\sigma = \sigs$.
The \emph{inverse MREIT problem} then reads as follows:
\begin{align}
\label{eq:IP_MREITpaper}
\mbox{determine $\sigs$ from the knowledge of $B_{z,\star}^j$, $j = 1,2$.}
\end{align}
Motivated by \eqref{eq:maxwell_relation_MREITpaper}, we formulate the iteration sequence of the \emph{\Bz}  with initial guess $\sigma^0\in\mcP$.

\begin{procedure}[Iteration sequence]
\label{proc:BZ_MREITpaper}
\begin{enumerate}[1.]
\item Calculate the vector field 
\[
\mcV^{n+1}(\MR) := \begin{cases}
\frac{1}{\mu_0}\left[(\sigma^n(\MR)\A[\sigma^n](\MR))^{-1}\VEC{\nabla^2 B_{z,\star}^1(\MR)}{\nabla^2 B_{z,\star}^2(\MR)}\right],\quad \MR\in\Ott,\\
(0,0)^t,\quad \MR\in\Omega\setminus\Ott,
\end{cases}
\]
in $\Omega$, where $\A[\sigma^n](\MR) = \MATRIX{\frac{\partial u_1^n(\MR)}{\partial y}}{-\frac{\partial u_1^n(\MR)}{\partial x}}{\frac{\partial u_2^n(\MR)}{\partial y}}{-\frac{\partial u_2^n(\MR)}{\partial x}}$ and $u_j^n$ denotes the solution of the direct problem \eqref{eq:forward_problem_MREITpaper} for $\sigma = \sigma^n$.
\item Determine $\ln\sigma^{n+1}$ as the solution of 
\begin{align}
\label{eq:Iterationssequenz_MREITpaper}
\nabla^2 \ln\sigma^{n+1} = \nabla\cdot \mcV^{n+1}\quad\mbox{in }\Omega \qquad
\ln\sigma^{n+1} = \ln\sigs\quad\mbox{on }\partial\Omega.
\end{align}
\item Define the new iterate $\sigma^{n+1} := \exp(\ln\sigma^{n+1})>0$.
\end{enumerate}
\end{procedure}

\begin{remark}
\label{rem:Iteration_sequence_MREITpaper}
\begin{enumerate}[(i)]
\item We will often drop the dependency of $\mcV$ and $\A$ on $\MR\in\Omega$ and understand those quantities in a point-wise sense.
\item Procedure \ref{proc:BZ_MREITpaper} differs from previous formulations of the \Bz, see, e.g., \cite{HarmonicBz_mainpaper, HarmonicBz_2003b, Jijun_Seo_Woo_MREITconv, Jijun_Seo_Woo_MREITerrest}, by determining the new iterate as a solution of \eqref{eq:Iterationssequenz_MREITpaper}. We believe, that there exists a formulation equivalent to \eqref{eq:Iterationssequenz_MREITpaper} utilizing a suitable fundamental solution. Since this issue is not relevant for this work, we did not investigate it.
\item As mentioned in \cite[sec. 2.2]{Jijun_Seo_Woo_MREITerrest}, $\A[\sigma^n]$ is invertible in $\Ott$ (and procedure \ref{proc:BZ_MREITpaper} is well-defined) but does not need to be invertible up to the boundary. Furthermore, $\sigs\mid_{\Omega\setminus\Ott} = \sigma_b$ together with \eqref{eq:IP_data_MREITpaper} implies $\nabla^2 B_{z,\star}^1 = \nabla^2 B_{z,\star}^2 = 0$ in $\Omega\setminus\Ott$, such that it is reasonable to define $\mcV(\MR) = 0$ for $\MR\in\Omega\setminus\Ott$.
\item Regarding the uniqueness of the inverse problem \eqref{eq:IP_MREITpaper}, we refer to \cite[sec. 2.4.2]{seo_woo_2010_MREITcompilation}.
\item Throughout the article, the data $B_{z,\star}^1,\, B_{z,\star}^2$ is assumed to be known exactly. Denoising techniques have to be employed as soon as measurement noise is present in $B_{z,\star}^1,\, B_{z,\star}^2$ since the differentiation when obtaining $\nabla^2 B_{z,\star}^1,\, \nabla^2 B_{z,\star}^2$ will be sensitive to noise. We refer to \cite{HarmonicBz_mainpaper, HarmonicBz_2003b, seo_woo_2010_MREITcompilation, Sadleir_etal_MREIT_noise_estimation, JJliu2011noisy_MREIT_recon, seo_woo_2011_noniterativeBz} for various articles that examine the problem of noise in MREIT.
\item $\sigs$ is a fixed-point of procedure \ref{proc:BZ_MREITpaper} in the sense that if $\sigma^n = \sigs$, it is $\mcV^{n+1} = \nabla\ln\sigs$, $\ln\sigma^{n+1} = \ln\sigs$ and $\sigma^{n+1} = \sigs$. Nonetheless, we want to stress that aside from this correlation, $\mcV^{n+1} = 0$ in $\Omega\setminus\Ott$ does not imply that $\nabla\ln\sigma^{n+1} = 0$ in $\Omega\setminus\Ott$ as well.
\end{enumerate}
\end{remark}

It is the aim of this paper to extend the existing convergence theory \cite{Jijun_Seo_Woo_MREITconv, Jijun_Seo_Woo_MREITerrest}, such that an approximative solution of the direct problem \eqref{eq:forward_problem_MREITpaper}, e.g. a finite element approximation or an approximation of the type described in section \ref{sec:RBM_and_MREIT_MREITpaper}, can be used in procedure \ref{proc:BZ_MREITpaper} as well. To this end, we formulate the following iteration sequence of the \emph{approximative \Bz} with initial guess $\sigma^0\in\mcP$.

\begin{procedure}[Approximative iteration sequence]
\label{proc:approx_BZ_MREITpaper}
\begin{enumerate}[1.]
\item Calculate the approximative vector field 
\[
\mcV_N^{n+1}(\MR) := \begin{cases}
\frac{1}{\mu_0}\left[(\sigma^n(\MR)\A_N[\sigma^n](\MR))^{-1}\VEC{\nabla^2 B_{z,\star}^1(\MR)}{\nabla^2 B_{z,\star}^2(\MR)}\right],\quad \MR\in\Ott,\\
(0,0)^t,\quad \MR\in\Omega\setminus\Ott,
\end{cases}
\]
in $\Omega$, where $\A_N[\sigma^n](\MR) = \MATRIX{\frac{\partial u_{1,N}^n(\MR)}{\partial y}}{-\frac{\partial u_{1,N}^n(\MR)}{\partial x}}{\frac{\partial u_{2,N}^n(\MR)}{\partial y}}{-\frac{\partial u_{2,N}^n(\MR)}{\partial x}}$ and $u_{j,N}^n$ denotes the yet unspecified approximation to $u_j^n$, the exact solution of \eqref{eq:forward_problem_MREITpaper} for $\sigma = \sigma^n$.
\item Determine $\ln\sigma^{n+1}$ as the solution of 
\begin{align}
\label{eq:approx_Iterationssequenz_MREITpaper}
\nabla^2 \ln\sigma^{n+1} = \nabla\cdot \mcV_N^{n+1}\quad\mbox{in }\Omega \qquad
\ln\sigma^{n+1} = \ln\sigs\quad\mbox{on }\partial\Omega.
\end{align}
\item Define the new iterate $\sigma^{n+1} := \exp(\ln\sigma^{n+1})>0$.
\end{enumerate}
\end{procedure}

\begin{remark}
\label{rem:Approx_Itersequence_MREITpaper}
It is important to note that procedures \ref{proc:BZ_MREITpaper} \& \ref{proc:approx_BZ_MREITpaper} produce different sequences of iterates $\{\sigma^1,\,\sigma^2,\dots\}$. Whenever we refer to $u_j^n$, the solution of the direct problem \eqref{eq:forward_problem_MREITpaper} for $\sigma = \sigma^n$, it is meant with respect to the underlying procedure.
\end{remark}

Since $u_{j,N}^n$ is an approximative solution of \eqref{eq:forward_problem_MREITpaper}, the well-definedness of procedure \ref{proc:approx_BZ_MREITpaper} can not be obtained as in remark \ref{rem:Iteration_sequence_MREITpaper} and we make the following assumption.

\begin{assumption}
\label{ass:A_AN_invert_MREITpaper}
For the remainder of this article, we assume that procedure \ref{proc:approx_BZ_MREITpaper} is well-defined. Explicitly, we require the matrices $\A_N[\sigma^n]$ to be invertible inside of $\Ott$ for all $n=0,1,2,\dots$
\end{assumption}

This assumption is reasonable since $\A_N[\sigma^n] = \A[\sigma^n] + \left(\A_N[\sigma^n] - \A[\sigma^n]\right)$ and as long as the perturbation $\A_N[\sigma^n] - \A[\sigma^n]$ is small (e.g. when $u_{j,N}^n$ is a good approximation), the invertibility of $\A_N[\sigma^n]$ might hold through the respective property of $\A[\sigma^n]$.

With sight on the convergence theory in section \ref{subsec:conv_RBZ_MREITpaper}, we investigate the regularity of the iterates and the vector fields defined during procedures \ref{proc:BZ_MREITpaper} \& \ref{proc:approx_BZ_MREITpaper}.

\begin{theorem}
\label{thm:Iter_Reg_MREITpaper}
\begin{enumerate}[(a)]
\item The iterates $\sigma^{n+1}$ defined by procedure \ref{proc:BZ_MREITpaper} with initial guess $\sigma^0 \in\mcP$ fulfill $\sigma^{n+1}\in\mcP$ for $n=0,1,2,\dots$
\item As long as the approximations $u_{j,N}^{n}$ in procedure \ref{proc:approx_BZ_MREITpaper} fulfill $u_{j,N}^{n}\in \Conea(\Ott)$, the iterates $\sigma^{n+1}$ defined by procedure \ref{proc:approx_BZ_MREITpaper} with initial guess $\sigma^0 \in\mcP$ fulfill $\sigma^{n+1}\in\mcP$ for $n=0,1,2,\dots$
\end{enumerate}
\end{theorem}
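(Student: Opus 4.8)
The plan is to prove both statements simultaneously by induction on $n$, the induction hypothesis being $\sigma^n\in\mcP$ (for part (a)) resp.\ $\sigma^n\in\mcP$ together with the standing requirement $u_{j,N}^n\in\Conea(\Ott)$ (for part (b)); the base case $n=0$ is precisely the hypothesis on $\sigma^0$. Since $\sigma^{n+1}=\exp(\ln\sigma^{n+1})$ is automatically positive on $\oO$, and since $\exp$ is smooth so that $v\in\Conea(\oO)$ implies $\exp(v)\in\Conea(\oO)$ (use $\nabla\exp(v)=\exp(v)\nabla v$ together with the fact that products of $\Ca$-functions on the compact set $\oO$ are again $\Ca$), the whole claim reduces to showing that the function $\ln\sigma^{n+1}$ produced in step~2 of the respective procedure lies in $\Conea(\oO)$.

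The first substantial step is to establish the regularity of the vector field defined in step~1. For part (a): by the induction hypothesis $\sigma^n\in\mcP$, so Lemma~\ref{lemma:Lemma_proof_PDE_MREITpaper}(b) applied with $g=0$ gives $u_j^n\in C^{1,\alpha}$ with $\nabla u_j^n\in\Ca(\Ot)$, whence the entries of $\A[\sigma^n]$, and thus of $\sigma^n\A[\sigma^n]$, lie in $\Ca(\Ot)$. On the compact set $\overline{\Ot}\subset\Ott$ the matrix $\sigma^n\A[\sigma^n]$ is invertible (remark~\ref{rem:Iteration_sequence_MREITpaper}(iii) and $\sigma^n>0$), so its determinant is continuous and nowhere vanishing there, hence bounded away from $0$; by the usual cofactor/determinant formula, $(\sigma^n\A[\sigma^n])^{-1}$ then has entries in $\Ca(\Ot)$. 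For the data, \eqref{eq:IP_data_MREITpaper} combined with $\sigs\in\mcP$ (which via Lemma~\ref{lemma:Lemma_proof_PDE_MREITpaper} yields $u_j^\star\in C^{1,\alpha}$) shows $\nabla^2 B_{z,\star}^j\in\Ca(\Ott)$. Crucially, $\sigs\vert_{\oO\setminus\Ot}=\sigma_b$ together with \eqref{eq:IP_data_MREITpaper} forces $\nabla^2 B_{z,\star}^j\equiv 0$ on $\Ott\setminus\overline{\Ot}$. Consequently $\mcV^{n+1}$ vanishes on $\Ott\setminus\overline{\Ot}$, is $\Ca$ on $\overline{\Ot}$, and — via a short gluing argument using that $\partial\Ot$ is a $\Conea$-portion and that the $\Ca(\Ot)$-representative vanishes on $\partial\Ot$ — it extends to a field $\mcV^{n+1}\in\Ca(\oO)$ which in addition vanishes on a neighbourhood of $\partial\Ott$, so that the zero-extension past $\Ott$ is consistent. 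For part (b) the argument is verbatim the same with $\A_N[\sigma^n]$ in place of $\A[\sigma^n]$: its entries lie in $\Ca(\Ott)$ by the hypothesis $u_{j,N}^n\in\Conea(\Ott)$, and $\sigma^n\A_N[\sigma^n]$ is invertible inside $\Ott$ by Assumption~\ref{ass:A_AN_invert_MREITpaper}, hence invertible with $\Ca$ inverse on the compact set $\overline{\Ot}$; the data $\nabla^2 B_{z,\star}^j$ and their vanishing on $\Ott\setminus\overline{\Ot}$ are unaffected by the choice of procedure.

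It then remains to invoke elliptic regularity in step~2: $\ln\sigma^{n+1}$ solves $\nabla^2\ln\sigma^{n+1}=\nabla\cdot\mcV^{n+1}$ in $\Omega$ with $\ln\sigma^{n+1}=\ln\sigs$ on $\partial\Omega$. Subtracting a $\Conea(\oO)$-extension of the boundary datum $\ln\sigs$ (available since $\sigs\in\mcP$ gives $\ln\sigs\in\Conea(\oO)$) homogenises the boundary condition, and the right-hand side is the divergence of the field $\mcV^{n+1}\in\Ca(\oO)$; since $\partial\Omega$ is a $\Conea$-domain, Schauder theory for equations in divergence form (cf.\ \cite[Thm.~8.34 and Ch.~8]{gilbarg_trudinger_PDEs}) yields $\ln\sigma^{n+1}\in\Conea(\oO)$. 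Together with the first paragraph this gives $\sigma^{n+1}=\exp(\ln\sigma^{n+1})\in\mcP$, closing the induction for both (a) and (b).

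The step I expect to be the main obstacle is the one carried out in the second paragraph: assembling the piecewise definition of $\mcV^{n+1}$ (resp.\ $\mcV_N^{n+1}$) into a globally $\Ca(\oO)$ field, because the invertibility of $\sigma^n\A[\sigma^n]$ is only guaranteed on the \emph{open} set $\Ott$ and may a priori degenerate towards $\partial\Ott$. The resolution, which I would want to spell out carefully, is that $\nabla^2 B_{z,\star}^j$ vanishes identically on the collar $\Ott\setminus\overline{\Ot}$, so the potentially singular behaviour of $(\sigma^n\A[\sigma^n])^{-1}$ near $\partial\Ott$ is multiplied by zero and is never seen; this simultaneously provides the Hölder continuity of $\mcV^{n+1}$ across $\partial\Ot$ and the consistency of the zero-extension past $\partial\Ott$. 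Everything else — the chain rule for $\exp$, the product rule in $\Ca$, and the quotient estimate for $(\sigma^n\A[\sigma^n])^{-1}$ on $\overline{\Ot}$ — is routine and I would not carry it out in detail.
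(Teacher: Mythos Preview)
Your proposal is correct and follows essentially the same approach as the paper's own proof: both argue by induction, establish $\mcV^{n+1}\in\Ca(\oO)$ by combining the $\Ca$-regularity of the matrix inverse on the inner domain with the vanishing of $\nabla^2 B_{z,\star}^j$ outside $\overline{\Ot}$, and then invoke \cite[Thm.~8.34]{gilbarg_trudinger_PDEs} for the Poisson problem \eqref{eq:Iterationssequenz_MREITpaper}/\eqref{eq:approx_Iterationssequenz_MREITpaper}. Your treatment of the gluing across $\partial\Ot$ and of the potential degeneracy of $(\sigma^n\A[\sigma^n])^{-1}$ near $\partial\Ott$ is in fact more explicit than the paper's, which simply asserts $\mcV^{1}\in\Ca(\Ott)$ and then $\mcV^{1}\in\Ca(\oO)$ without spelling out the transition.
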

\begin{proof}
\begin{enumerate}[(a)]
\item With $\sigs\in\mcP$ we can apply lemma \ref{lemma:Lemma_proof_PDE_MREITpaper} to derive $u_1^\star,\,u_2^\star\in\Conea(\Ott)$. Noticing that the product of h\"older-continuous functions is h\"older-contin\-uous, \eqref{eq:IP_data_MREITpaper} yields $\nabla^2 B_{z,\star}^1,\, \nabla^2 B_{z,\star}^2\in\Ca(\Ott)$. Repeating the same argument for $\sigma^0\in\mcP$ and associated forward solutions and combining it with the fact that for any $v\in \Ca(\Ott)$ with $v(\MR)\neq 0,\,\forall\MR\in\Ott$, it is $\frac{1}{v}\in\Ca(\Ott)$, we can deduce $\mcV^{1}\in\Ca(\Ott)$ (component-wise). 
Since $\sigs$ is already constant in $\oO\setminus\Ot$, it is $\nabla^2 B_{z,\star}^1 = \nabla^2 B_{z,\star}^1 = 0$ in $\oO\setminus\Ot$ and together with $\Ot\subset\subset \Ott\subset\subset\Omega$ it is actually $\mcV^{1}\in\Ca(\oO)$.
Since $\ln\sigma^{1}$ is defined as the solution of \eqref{eq:Iterationssequenz_MREITpaper}, its regularity is a consequence of the regularity of the right-hand-side and \cite[thm. 8.34]{gilbarg_trudinger_PDEs} yields the desired result. The remaining statement follows via induction.
\item The proof works analogously: the regularity assumption $u_{j,N}^{n}\in \Conea(\Ott)$ together with assumption \ref{ass:A_AN_invert_MREITpaper} yields the regularity of the vector field and the regularity of the iterates of procedure \ref{proc:approx_BZ_MREITpaper} follows from \cite[thm. 8.34]{gilbarg_trudinger_PDEs}
\end{enumerate}
\end{proof}

As a conclusion of this section, we gather properties of the iterates of procedure \ref{proc:approx_BZ_MREITpaper} which are interesting on their own and especially useful for the convergence proof in the upcoming section.

\begin{lemma}
\label{lemma:Lemma_iteration_properties_MREITpaper}
Let the approximations in procedure \ref{proc:approx_BZ_MREITpaper} fulfill $u_{j,N}^{n}\in \Conea(\Ott)$, such that theorem \ref{thm:Iter_Reg_MREITpaper} holds.
\begin{enumerate}[(a)]
\item There exists a constant $C^\dagger \geq 1$, that does not depend on $n$, such that
\begin{align}
\label{eq:proof_normjump_MREITpaper}
\norm{\ln\sigma^{n+1} - \ln\sigs}_{\Conea(\Ot)} \leq C^\dagger \norm{\mcV_N^{n+1} - \nabla\ln\sigs}_{\Ca(\Ot)},\, n = 0,1,2,\dots
\end{align}
\item There exists a constant $C^\ddagger \geq 1$, that does not depend on $n$, such that
\begin{align}
\label{eq:proof_domainjump_MREITpaper}
\norm{\ln\sigma^{n+1} - \ln\sigs}_{\Conea(\Omega)} \leq C^\ddagger \norm{\ln\sigma^{n+1} - \ln\sigs}_{\Conea(\Ot)},\, n = 0,1,2,\dots
\end{align}
\item It holds for $n = 0,1,2,\dots$: given an estimate $\norm{\ln\sigs - \ln\sigma^n}_{C^1(\Ot)} \leq K\epsilon^{n+1}$, for some $0<\epsilon <1$ and $K \geq 1$, there exists a constant $\tilde{K}\geq 1$, that does not depend on $n$, such that
\begin{align}
\label{eq:proof_somebound_MREITpaper}
\norm{\frac{\sigs - \sigma^n}{\sigma^n}}_{C^1(\Ot)} \leq \tilde{K}\epsilon^{n+1}.
\end{align}
\end{enumerate}
\end{lemma}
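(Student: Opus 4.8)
The plan is to treat (a) and (b) by one elliptic regularity argument resting on a structural observation about $F$, and (c) by a direct computation. First I would note that $\ln\sigma^{n+1}$ and $\ln\sigs$ solve the \emph{same} Poisson problem with different divergence-form data: subtracting \eqref{eq:approx_Iterationssequenz_MREITpaper} from the trivial identity $\nabla^2\ln\sigs=\nabla\cdot(\nabla\ln\sigs)$, which carries the same boundary value $\ln\sigs$, shows that $w:=\ln\sigma^{n+1}-\ln\sigs$ satisfies
\[
\nabla^2 w=\nabla\cdot F\quad\mbox{in }\Omega,\qquad w=0\quad\mbox{on }\partial\Omega,\qquad F:=\mcV_N^{n+1}-\nabla\ln\sigs.
\]
The key point is that $F$ vanishes outside $\Ot$: indeed $\nabla\ln\sigs=0$ on $\oO\setminus\Ot$ since $\sigs$ is constant there, while $\mcV_N^{n+1}=0$ on $\oO\setminus\Ot$ because it is zero on $\Omega\setminus\Ott$ by definition and $\nabla^2 B_{z,\star}^1=\nabla^2 B_{z,\star}^2=0$ on $\oO\setminus\Ot$ (see remark \ref{rem:Iteration_sequence_MREITpaper}(iii) and the proof of theorem \ref{thm:Iter_Reg_MREITpaper}). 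Since moreover $F\in\Ca(\oO)$ (from $\sigs\in\mcP$ and the regularity argument behind theorem \ref{thm:Iter_Reg_MREITpaper}), the zero extension of $F$ is $\Ca$-continuous and $\norm{F}_{\Ca(\oO)}=\norm{F}_{\Ca(\Ot)}$. For part (a) I would then invoke the Schauder theory for the Poisson equation with divergence-form right-hand side and homogeneous Dirichlet data on the $\Conea$-domain $\Omega$ (the same tool used in the proof of theorem \ref{thm:Iter_Reg_MREITpaper}; see \cite[thm. 8.34]{gilbarg_trudinger_PDEs}): this gives $w\in\Conea(\oO)$ together with $\norm{w}_{\Conea(\oO)}\le C(\norm{w}_{C^0(\Omega)}+\norm{F}_{\Ca(\oO)})$, and the lower-order term is absorbed by the energy estimate $\norm{\nabla w}_{L^2(\Omega)}\le\norm{F}_{L^2(\Omega)}$ combined with $L^p$-regularity and Sobolev embedding ($p>2$), which yield $\norm{w}_{C^0(\Omega)}\le C\norm{F}_{\Ca(\oO)}$; alternatively one writes $w$ as a Newtonian potential of $F$ plus a harmonic corrector and estimates the two terms separately. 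All constants here depend only on $\Omega$, $\Ot$ and $\alpha$, not on $\sigma^n$, since $\sigma^n$ enters only through the right-hand side $F$; together with $\norm{w}_{\Conea(\Ot)}\le\norm{w}_{\Conea(\oO)}$ and $\norm{F}_{\Ca(\oO)}=\norm{F}_{\Ca(\Ot)}$ this yields \eqref{eq:proof_normjump_MREITpaper}.

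For part (b) the same vanishing of $F$ shows that $w$ is harmonic in the fixed annular $\Conea$-domain $\Omega\setminus\overline{\Ot}$, with homogeneous data on $\partial\Omega$ and $\Conea$-smooth data $w\vert_{\partial\Ot}$ on $\partial\Ot$. Global Schauder estimates on $\Omega\setminus\overline{\Ot}$, again with constants independent of $\sigma^n$, bound $\norm{w}_{\Conea(\Omega\setminus\overline{\Ot})}$ by the $\Conea$-norm of the boundary data, i.e. by $C\norm{w}_{\Conea(\Ot)}$ via the trace estimate on $\partial\Ot$ and $w=0$ on $\partial\Omega$. Splitting the $\Conea(\oO)$-norm of $w$ over the cover $\oO=\overline{\Ot}\cup(\oO\setminus\Ot)$ --- using that $w\in\Conea(\oO)$ by theorem \ref{thm:Iter_Reg_MREITpaper} and the standard gluing of $\Conea$-norms along $\partial\Ot$ --- then gives \eqref{eq:proof_domainjump_MREITpaper} with $C^\ddagger$ independent of $n$.

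Part (c) is elementary and decoupled from the PDE analysis. Setting $\psi:=\ln\sigs-\ln\sigma^n$, one has $\frac{\sigs-\sigma^n}{\sigma^n}=e^{\psi}-1=\psi\int_0^1 e^{t\psi}\,\mathrm{d}t$ and $\nabla(e^{\psi}-1)=e^{\psi}\nabla\psi$. Since $0<\epsilon<1$ forces $\norm{\psi}_{C^1(\Ot)}\le K\epsilon^{n+1}\le K$, both the function and its gradient are bounded in modulus by $e^{K}\norm{\psi}_{C^1(\Ot)}\le e^{K}K\epsilon^{n+1}$, so \eqref{eq:proof_somebound_MREITpaper} holds with, for instance, $\tilde{K}:=2e^{K}K\ge 1$, which does not depend on $n$.

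The step I expect to be the main obstacle is part (a): the right-hand side $\nabla\cdot F$ is only a distribution (as $F$ is merely $\Ca$), and the homogeneous boundary condition sits on $\partial\Omega$, away from the support of $F$, so the interior Schauder estimate alone leaves an uncontrolled term $\norm{w}_{C^0(\Omega)}$ that must be handled by a global energy/$L^p$ argument --- circumventing the failure of the embedding $H^1\hookrightarrow C^0$ in two dimensions --- or, equivalently, by the potential-theoretic representation of $w$. Parts (b) and (c) are then routine.
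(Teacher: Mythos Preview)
Your proposal is correct and follows the same overall architecture as the paper: for (a) and (b) you set $w=\ln\sigma^{n+1}-\ln\sigs$, identify the Poisson problem with divergence-form source $F=\mcV_N^{n+1}-\nabla\ln\sigs$, exploit $F\equiv 0$ on $\oO\setminus\Ot$, and for (b) use harmonicity in the annulus; for (c) you do a direct pointwise estimate. The one place you diverge from the paper is the ``main obstacle'' you flag in (a). The paper sidesteps it entirely: rather than combining an interior Schauder estimate (which would leave $\norm{w}_{C^0(\Omega)}$ dangling) with an energy/$L^p$/Sobolev argument or a Newtonian-potential representation, it invokes the \emph{global} Schauder estimate \cite[(8.90)]{gilbarg_trudinger_PDEs} for the Dirichlet problem $\nabla^2 w=\nabla\cdot F$, $w|_{\partial\Omega}=0$, on the $\Conea$-domain $\Omega$, which gives $\norm{w}_{\Conea(\Omega)}\le C\norm{F}_{\Ca(\Omega)}$ in one stroke (the zero boundary data absorbs the lower-order term). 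Your route is valid but unnecessarily elaborate; the paper's is a one-line citation. For (c), your integral identity $e^\psi-1=\psi\int_0^1 e^{t\psi}\,\mathrm{d}t$ is in fact a bit slicker than the paper's argument, which treats the $C^0$ and gradient parts separately via the mean value theorem and arrives at the slightly sharper constant $\tilde K=Ke^{K\epsilon}$.
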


\begin{proof}
\begin{enumerate}[(a)]
\item It is obvious, that $\ln\sigma^{n+1} - \ln\sigs$ solves
\begin{align*}
\nabla^2 \ln\sigma^{n+1} - \nabla^2\ln\sigs &= \nabla\cdot \left(\mcV_N^{n+1} - \nabla\ln\sigs\right) \mbox{ in } \Omega,\\ 
\ln\sigma^{n+1} - \ln\sigs &= 0 \mbox{ on }\partial\Omega,
\end{align*}
and via \cite[(8.90)]{gilbarg_trudinger_PDEs} the estimate
\begin{align*}
\norm{\ln\sigma^{n+1} - \ln\sigs}_{\Conea(\Ot)} &\leq C \norm{\mcV_N^{n+1} - \nabla\ln\sigs}_{\Ca(\Omega)}
\end{align*}
holds, where $C$ is independent of $\mcV_N^{n+1}$ and thus $n$. Since $\sigs$ is constant in $\oO\setminus\Ot$, it is $\nabla^2 B_{z,\star}^1 = \nabla^2 B_{z,\star}^2 = 0$ and thus $\mcV_N^{n+1} = 0$ in $\oO\setminus\Ot$ as well.
Together with $C^\dagger := \max\{C,\, 1\} \geq 1$, it follows
\begin{align*}
\norm{\ln\sigma^{n+1} - \ln\sigs}_{\Conea(\Ot)} &\leq C^\dagger \norm{\mcV_N^{n+1} - \nabla\ln\sigs}_{\Ca(\Omega)}\\
&= C^\dagger \norm{\mcV_N^{n+1} - \nabla\ln\sigs}_{\Ca(\Ot)}.
\end{align*}
\item Since $\mcV_N^{n+1} = 0$ in $\Omega\setminus\Ot$, $e^{n+1} := \ln\sigma^{n+1} - \ln\sigs$ fulfills 
\begin{align*}
\nabla^2 e^{n+1} = 0\mbox{ in }\Omega\setminus\Ot, \qquad e^{n+1} = 0\mbox{ on }\partial\Omega, \qquad e^{n+1} = e^{n+1}\mbox{ on }\partial\Ot.
\end{align*}
Utilizing $\norm{e^{n+1}}_{\Conea(\Omega)} \leq \norm{e^{n+1}}_{\Conea(\Ot)} + \norm{e^{n+1}}_{\Conea(\Omega\setminus\Ot)}$ and applying \cite[(8.90)]{gilbarg_trudinger_PDEs} to $\norm{e^{n+1}}_{\Conea(\Omega\setminus\Ot)}$ yields
\begin{align*}
\norm{e^{n+1}}_{\Conea(\Omega)} \leq (1 + \tilde{C}) \norm{e^{n+1}}_{\Conea(\Ot)} =: C^\ddagger \norm{e^{n+1}}_{\Conea(\Ot)},
\end{align*}
where $\tilde{C}$ stems from \cite[(8.90)]{gilbarg_trudinger_PDEs} and is independent of $n$, such that $C^\ddagger \geq 1$ is also independent of $n$.
\item Regarding $\norm{\frac{\sigs - \sigma^n}{\sigma^n}}_{C(\Ot)}$, it is 
\[
\norm{\ln\sigs - \ln\sigma^n}_{C(\Ot)} \leq \norm{\ln\sigs - \ln\sigma^n}_{C^1(\Ot)} \leq K\epsilon^{n+1},
\]
such that
\begin{align}
\label{eq:lemma_sigmareg_2_MREITpaper}
e^{-K\epsilon^{n+1}}\leq\frac{\sigs}{\sigma^n}\leq e^{K\epsilon^{n+1}} \quad \Leftrightarrow \quad   e^{-K\epsilon^{n+1}}-1\leq\frac{\sigs}{\sigma^n}-1\leq e^{K\epsilon^{n+1}}-1
\end{align}
holds point-wise in $\overline{\Ot}$. Therefore, it is 
\begin{align*}
\norm{\frac{\sigs - \sigma^n}{\sigma^n}}_{C(\Ot)} \leq \max \{\vert e^{K\epsilon^{n+1}}-1\vert,\,\vert e^{-K\epsilon^{n+1}}-1\vert\}
\end{align*}
and applying the mean value theorem to $f_1(x) := e^{Kx}$ and $f_2(x) := e^{-Kx}$ yields the existence of $\xi_n^+,\,\xi_n^-\in(0,\epsilon^{n+1})\subset (0,\epsilon)$, such that
\begin{align*}
\vert e^{K\epsilon^{n+1}}-1\vert = \epsilon^{n+1} Ke^{K\xi_n^+}\quad\mbox{and}\quad \vert e^{-K\epsilon^{n+1}}-1\vert = \epsilon^{n+1} Ke^{-K\xi_n^-}.
\end{align*}
Since $Ke^{-K\xi_n^-} \leq Ke^{K\xi_n^+}\leq Ke^{K\epsilon}$ for $n = 0,1,2,\dots$, we define $\tilde{K} := Ke^{K\epsilon} \geq 1$ and conclude $\norm{\frac{\sigs - \sigma^n}{\sigma^n}}_{C(\Ot)} \leq \tilde{K}\epsilon^{n+1}$.\\
Regarding 
\begin{align*}
{\nabla\left(\frac{\sigs}{\sigma^n}-1\right)}_{C(\Ot)} = \norm{\nabla \frac{\sigs}{\sigma^n}}_{C(\Ot)},
\end{align*}
it is 
\begin{align*}
\norm{\nabla\left(\ln\left(\frac{\sigs}{\sigma^n}\right)\right)}_{C(\Ot)} \leq \norm{\ln\sigs - \ln\sigma^n}_{C^1(\Ot)} \leq K\epsilon^{n+1},
\end{align*}
such that
\begin{align*}
-K\epsilon^{n+1}\leq \frac{\sigma^n}{\sigs}\nabla\left(\frac{\sigs}{\sigma^n}\right)\leq K\epsilon^{n+1}
 \quad \Leftrightarrow \quad   
-\frac{\sigs}{\sigma^n}K\epsilon^{n+1}\leq\nabla\left(\frac{\sigs}{\sigma^n}\right)\leq \frac{\sigs}{\sigma^n}K\epsilon^{n+1}
\end{align*}
holds point-wise and component-wise. With \eqref{eq:lemma_sigmareg_2_MREITpaper}, it is $\norm{\frac{\sigs}{\sigma^n}}_{C(\Ot)}\leq e^{K\epsilon^{n+1}}$ and we conclude
\begin{align*}
\norm{\nabla\left(\frac{\sigs}{\sigma^n}\right)}_{C(\Ot)} \leq e^{K\epsilon^{n+1}}K\epsilon^{n+1} \leq \tilde{K} \epsilon^{n+1}
\end{align*}
and the statement follows.
\end{enumerate}
\end{proof}

Obviously, due to the first part of theorem \ref{thm:Iter_Reg_MREITpaper}, the results of lemma \ref{lemma:Lemma_iteration_properties_MREITpaper} can be obtained for the iterates of procedure \ref{proc:BZ_MREITpaper} as well.

\subsection{Convergence of the approximative Harmonic $B_z$ Algorithm\xspace}
\label{subsec:conv_RBZ_MREITpaper}

We gather supplementary results for the convergence theorem in the following lemma.

\begin{lemma}
\begin{enumerate}[(a)]
\item For a $\Conea$-domain $\Omega\subset\R^2$ and $a_1,\,a_2,\,a_3,\,a_4\in \Ca(\oO)$ 
\[
\norm{\MATRIX{a_1}{a_2}{a_3}{a_4}}_{\Ca(\Omega)} := 2\max_{i=1,2,3,4}\left\{\norm{a_i}_{\Ca(\Omega)}\right\}
\]
is a submultiplicative matrix norm that is consistent with the vector norm
\[
\norm{\VEC{a_1}{a_2}}_{\Ca(\Omega)} := \max\left\{\norm{a_1}_{\Ca(\Omega)},\norm{a_2}_{\Ca(\Omega)}\right\}.
\]
\item There exists a constant $C_\A >0$ such that $\norm{\A[\sigs]^{-1}}_{\Ca(\Ot)} \leq C_\A$.
\end{enumerate}
\end{lemma}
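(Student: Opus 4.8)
\medskip

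For part (a), the plan is to verify the matrix-norm axioms directly. The positivity, homogeneity, and triangle inequality follow immediately from the corresponding properties of $\norm{\cdot}_{\Ca(\Omega)}$ on the scalar entries, since taking $2\max$ over finitely many seminorms preserves them. The real point is submultiplicativity and consistency with the stated vector norm. For this I would first recall that $\Ca(\oO)$ is a Banach algebra, i.e. $\norm{fg}_{\Ca(\Omega)} \leq \norm{f}_{\Ca(\Omega)}\norm{g}_{\Ca(\Omega)}$ (this is the standard estimate for the product of H\"older functions used already in the proof of theorem \ref{thm:Iter_Reg_MREITpaper}). Then, writing out a single entry of a product of two $2\times 2$ matrices as a sum of two products of entries, one gets a factor $2$ from the two summands; carrying the bookkeeping through the definition with the two factors of $2$ in the matrix norm and the single factor of $2$ from the sum shows the $2\max\{\cdots\}$ scaling is exactly what is needed to absorb these combinatorial constants, giving both $\norm{MN}\leq\norm{M}\norm{N}$ and $\norm{Mv}\leq\norm{M}\norm{v}$. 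This is a routine but slightly fiddly calculation; the only thing to be careful about is getting the constant $2$ in the right place so that the inequalities are not off by a factor.

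\medskip

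For part (b), the plan is to combine the regularity result of lemma \ref{lemma:Lemma_proof_PDE_MREITpaper}(b) with Cramer's rule. Since $\sigs\in\mcP$, applying lemma \ref{lemma:Lemma_proof_PDE_MREITpaper} to the forward problem \eqref{eq:forward_problem_MREITpaper} with $\sigma=\sigs$ (which, as noted, is covered by \eqref{eq:Lemma_proof_PDE_MREITpaper}) gives $u_j^\star\in\Conea(\Ott)$ for $j=1,2$, hence the entries of $\A[\sigs]$, being first partial derivatives of the $u_j^\star$, lie in $\Ca(\Ot)$ (using $\Ot\cc\Ott$). Now $\A[\sigs]^{-1} = \frac{1}{\det\A[\sigs]}\,\mathrm{adj}\,\A[\sigs]$; the entries of the adjugate are again entries of $\A[\sigs]$ up to sign, so they are uniformly bounded in $\Ca(\Ot)$ by lemma \ref{lemma:Lemma_proof_PDE_MREITpaper}. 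For the determinant, $\det\A[\sigs] = \frac{\partial u_1^\star}{\partial x}\frac{\partial u_2^\star}{\partial y} - \frac{\partial u_1^\star}{\partial y}\frac{\partial u_2^\star}{\partial x}$ lies in $\Ca(\Ot)$ by the Banach-algebra property, and by remark \ref{rem:Iteration_sequence_MREITpaper}(iii) (invertibility of $\A[\sigs]$ inside $\Ott$) it is nonzero on the compact set $\overline{\Ot}$, hence bounded away from zero there; therefore $1/\det\A[\sigs]\in\Ca(\Ot)$. Using the Banach-algebra property once more to multiply $1/\det\A[\sigs]$ with each entry of the adjugate, together with the matrix norm from part (a), yields a finite bound $C_\A := \norm{\A[\sigs]^{-1}}_{\Ca(\Ot)}<\infty$.

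\medskip

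The main obstacle I anticipate is not conceptual but a matter of making two points precise. First, one must be sure that invertibility of $\A[\sigs]$ \emph{inside} $\Ott$ actually delivers a \emph{strictly positive lower bound} for $|\det\A[\sigs]|$ on $\overline{\Ot}$; this is exactly where the chain $\Ot\cc\Ott$ is used, since $\overline{\Ot}$ is a compact subset of the open set $\Ott$ on which the continuous function $\det\A[\sigs]$ is nonvanishing. Second, one needs the fact that $v\mapsto 1/v$ maps a H\"older function bounded away from zero to a H\"older function — this is the statement $\tfrac1v\in\Ca$ for $v\in\Ca$ with $v\neq 0$ on the (closed) domain, which was already invoked in the proof of theorem \ref{thm:Iter_Reg_MREITpaper}(a) and may simply be cited. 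Everything else is bookkeeping with the Banach-algebra estimate and the explicit $2\times 2$ inverse formula.
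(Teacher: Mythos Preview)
Your proposal is correct and matches the paper's approach: part (a) is routine (the paper in fact omits the proof entirely), and part (b) proceeds via Cramer's rule, the $\Ca$-regularity of the entries of $\A[\sigs]$ from lemma \ref{lemma:Lemma_proof_PDE_MREITpaper}, and a positive lower bound on $|\det\A[\sigs]|$ over $\overline{\Ot}$. The only minor difference is the source of that lower bound: the paper invokes an explicit quantitative estimate from \cite[prop.~2.1]{Jijun_Seo_Woo_MREITerrest} (based on \cite[prop.~2.10]{Alessandrini_Rosset_2004}) giving $\norm{1/\det\A[\sigs]}_{C(\Ot)}\leq 1/\underline{\sigma}^\star$, whereas you obtain it by compactness of $\overline{\Ot}\subset\Ott$ together with the invertibility of $\A[\sigs]$ in $\Ott$ from remark \ref{rem:Iteration_sequence_MREITpaper}(iii); both are valid.
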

\begin{proof}
\begin{enumerate}[(a)]
\item The statement follows from standard arguments in matrix and vector norm theory such that we omit the proof.
\item As mentioned in the proof of theorem \ref{thm:Iter_Reg_MREITpaper} it is $u_1^\star,\,u_2^\star\in\Conea(\Omega)$ and with the notation $\MATRIX{a_1}{a_2}{a_3}{a_4} := \A[\sigs]$, it is
\begin{align*}
&\norm{\A[\sigs]^{-1}}_{\Ca(\Ot)}
\leq 2 \norm{\frac{1}{\det\A[\sigs]}}_{C(\Ot)}\max_{i=1,2,3,4}\left\{\norm{a_i}_{\Ca(\Ot)}\right\}\\
&= 2 \norm{\frac{1}{\det\A[\sigs]}}_{C(\Ot)}\max_{j=1,2}\norm{\nabla u_j^\star}_{\Ca(\Ot)},
\end{align*}
where $\max_{j=1,2}\norm{\nabla u_j^\star}_{\Ca(\Ot)}$ is finite. Furthermore, \cite[prop. 2.1]{Jijun_Seo_Woo_MREITerrest} which is based on \cite[prop. 2.10]{Alessandrini_Rosset_2004} yields the existence of a constant $\underline{\sigma}^\star>0$ such that $\norm{\frac{1}{\det\A[\sigs]}}_{C(\Ot)} \leq \frac{1}{\underline{\sigma}^\star}$. Therefore, we obtain the result
\begin{align}
\label{eq:bound_Asigs_inv_MREITpaper}
\norm{\A[\sigs]^{-1}}_{\Ca(\Ot)} \leq \frac{2}{\underline{\sigma}^\star}\max_{j=1,2}\norm{\nabla u_j^\star}_{\Ca(\Ot)} =: C_\A.
\end{align}
\end{enumerate}
\end{proof}

In the following, we present the main result of this article - the convergence result for procedures \ref{proc:BZ_MREITpaper} \& \ref{proc:approx_BZ_MREITpaper}. Do note, that this result is inspired by \cite[thm. 3.2]{Jijun_Seo_Woo_MREITerrest}.
\begin{theorem}
\label{thm:convergence_RBZ_MREITpaper}
Let $\sigs\in\mcP = \{ \sigma\in\Conea(\oO)\mid \sigma(x) > 0,\, x\in\oO\}$ with $\sigs\mid_{\oO\setminus\Ot} = \sigma_b$, $\sigma_b$ a known constant, and recall that $C^\dagger$ was introduced in lemma \ref{lemma:Lemma_iteration_properties_MREITpaper}.
\begin{enumerate}[(a)]
\item Considering procedure \ref{proc:BZ_MREITpaper}, we obtain the following convergence result. There exists an $\epsilon >0$, such that if $\norm{\nabla\ln\sigs}_{\Ca(\Omega)} < \epsilon$, the resulting sequence of iterates $\sigma^n,\, n=1,2,\dots$, with initial guess $\sigma^0 = \sigma_b$, satisfies
\begin{align*}
\norm{\ln\sigma^n - \ln\sigs}_{\Conea(\Ot)}\leq C^\dagger\left(\frac{1}{2}\right)^n\epsilon,\quad n=1,2,\dots
\end{align*}
\item Considering procedure \ref{proc:approx_BZ_MREITpaper}, we obtain the following convergence result. There exists an $\epsilon >0$, such that if $\norm{\nabla\ln\sigs}_{\Ca(\Omega)} < \epsilon$ and the approximations $u_{j,N}^n$ fulfill 
\begin{enumerate}[(i)]
\item $u_{j,N}^n\in \Conea(\Ott)$ \hfill (regularity condition)
\item $\norm{\nabla u_{j,N}^n - \nabla u_j^n}_{\Ca(\Ot)}\leq \frac{\epsilon^{n+1}}{2 C_{\A}}$ \hfill (quality condition)
\end{enumerate}
throughout procedure \ref{proc:approx_BZ_MREITpaper}, the resulting sequence of iterates $\sigma^n,\, n=1,2,\dots$, with initial guess $\sigma^0 = \sigma_b$, satisfies
\begin{align*}
\norm{\ln\sigma^n - \ln\sigs}_{\Conea(\Ot)}\leq C^\dagger\left(\frac{1}{2}\right)^n\epsilon,\quad n=1,2,\dots
\end{align*}
\end{enumerate}
\end{theorem}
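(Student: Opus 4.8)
The plan is to prove (a) and (b) together, treating (a) as the special case $\A_N = \A$, $u_{j,N}^n = u_j^n$ of (b). Set $a_n := \norm{\ln\sigma^n - \ln\sigs}_{\Conea(\Ot)}$ and argue by induction on $n$, the claim being $a_n \le C^\dagger (1/2)^n\epsilon$ for all $n\ge 1$, where $\epsilon>0$ is fixed small at the very end, depending only on the domains, $\sigma_b$ and the structural constants appearing below. Since $\ln\sigma^{n+1}-\ln\sigs$ satisfies $\nabla^2(\ln\sigma^{n+1}-\ln\sigs) = \nabla\cdot(\mcV_N^{n+1}-\nabla\ln\sigs)$ in $\Omega$ with zero boundary values, Lemma~\ref{lemma:Lemma_iteration_properties_MREITpaper}(a) reduces the induction step to proving
\[
\norm{\mcV_N^{n+1} - \nabla\ln\sigs}_{\Ca(\Ot)} \le (1/2)^{n+1}\,\epsilon .
\]

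The first move is to rewrite the residual. The data relation \eqref{eq:IP_data_MREITpaper} says exactly $\VEC{\nabla^2 B_{z,\star}^1}{\nabla^2 B_{z,\star}^2} = \mu_0\,\sigs\,\A[\sigs]\,\nabla\ln\sigs$, so inserting this into the definition of $\mcV_N^{n+1}$ and using that $\mcV_N^{n+1}$ and $\nabla\ln\sigs$ both vanish on $\oO\setminus\Ot$, one gets on $\Ot$
\[
\mcV_N^{n+1} - \nabla\ln\sigs = \A_N[\sigma^n]^{-1}\Bigl[\tfrac{\sigs-\sigma^n}{\sigma^n}\A[\sigs] + \bigl(\A[\sigs]-\A[\sigma^n]\bigr) + \bigl(\A[\sigma^n]-\A_N[\sigma^n]\bigr)\Bigr]\nabla\ln\sigs .
\]
The key structural observation is that the residual carries \emph{two} powers of $\epsilon$: one from the outer factor $\nabla\ln\sigs$, whose $\Ca(\Ot)$-norm is $<\epsilon$ by hypothesis, and one from the bracket, each of whose three terms turns out to be of size $O(\epsilon\,(1/2)^{n+1})$. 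Hence it suffices to bound the bracket (in the submultiplicative matrix norm of the lemma preceding the theorem) by $\hat C\,\epsilon\,(1/2)^{n+1}$ with $\hat C$ independent of $n$, and to control $\norm{\A_N[\sigma^n]^{-1}}_{\Ca(\Ot)}$ uniformly; then a small $\epsilon$ absorbs the extra power.

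The three bracket terms are handled as follows. By Lemma~\ref{lemma:Lemma_iteration_properties_MREITpaper}(b) the inductive bound upgrades to $\norm{\ln\sigs-\ln\sigma^n}_{C^1(\Omega)}\le C^\ddagger a_n\le C^\ddagger C^\dagger(1/2)^n\epsilon$, whence $\norm{\tfrac{\sigs-\sigma^n}{\sigma^n}}_{C^1(\Omega)}\le C_*\,\epsilon\,(1/2)^{n+1}$ by the pointwise argument of Lemma~\ref{lemma:Lemma_iteration_properties_MREITpaper}(c); this already controls the first bracket term, as $\norm{\A[\sigs]}_{\Ca(\Ot)}$ is a fixed constant. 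For the second term I would use a perturbation estimate for the forward problem: $w := u_j^\star - u_j^n$ solves $\nabla\cdot(\sigma^n\nabla w) = \sigma^n g$ with homogeneous boundary data — note $\sigma^n = \sigs = \sigma_b$ on $\partial\Omega$, so the Neumann trace of $w$ vanishes — where $g$, built from $\tfrac{\sigs-\sigma^n}{\sigma^n}$, $\nabla u_j^\star$ and $\nabla^2 u_j^\star = -\nabla\ln\sigs\cdot\nabla u_j^\star$, lies in $C(\Omega)$; feeding $w$ into Lemma~\ref{lemma:Lemma_proof_PDE_MREITpaper} (the $H^2$-estimate \eqref{eq:Lemma_proof_est3_MREITpaper}, the 2D embedding $H^2(\Ott)\hookrightarrow\Ca(\Ott)$, and the Schauder estimate \eqref{eq:Lemma_proof_est5_MREITpaper}) gives $\norm{\nabla u_j^\star-\nabla u_j^n}_{\Ca(\Ot)}\le C\,\norm{\tfrac{\sigs-\sigma^n}{\sigma^n}}_{C^1(\Omega)}$, hence $\norm{\A[\sigs]-\A[\sigma^n]}_{\Ca(\Ot)}\lesssim \epsilon\,(1/2)^{n+1}$. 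The third term is handled directly by the quality condition~(ii): $\norm{\A[\sigma^n]-\A_N[\sigma^n]}_{\Ca(\Ot)}\le \epsilon^{n+1}/C_\A$. Finally $\norm{\A_N[\sigma^n]^{-1}}_{\Ca(\Ot)}$ stays uniformly bounded via a Neumann series around $\A[\sigs]^{-1}$, combining \eqref{eq:bound_Asigs_inv_MREITpaper} with the smallness of $\A_N[\sigma^n]-\A[\sigs]$ just established. Every constant appearing ($C_1,C_2,C_3$ of Lemma~\ref{lemma:Lemma_proof_PDE_MREITpaper}, the Hölder norms of $\nabla u_j^\star$, etc.) depends only on $\sup_n\norm{\nabla\ln\sigma^n}_{C(\Omega)}$, which — again by Lemma~\ref{lemma:Lemma_iteration_properties_MREITpaper}(b) and the inductive bound — does not exceed $(C^\ddagger C^\dagger+1)\epsilon\le C^\ddagger C^\dagger+1$, so these constants are uniform in $n$. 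Collecting the pieces yields $\norm{\mcV_N^{n+1}-\nabla\ln\sigs}_{\Ca(\Ot)}\le\hat C\,\epsilon^2\,(1/2)^{n+1}$ with $\hat C$ independent of $n$; choosing $\epsilon\le\min\{1,\hat C^{-1}\}$ gives the required $(1/2)^{n+1}\epsilon$-bound, and Lemma~\ref{lemma:Lemma_iteration_properties_MREITpaper}(a) closes the induction step. The base case $n=1$ follows from the same chain applied to the constant $\sigma^0=\sigma_b$ (for which $\A[\sigma^0]=\A[\sigma_b]$ and $\norm{\tfrac{\sigs-\sigma_b}{\sigma_b}}_{C^1(\Omega)}\le C\epsilon$ since $\ln\sigs-\ln\sigma_b$ vanishes on $\partial\Ot$), shrinking $\epsilon$ once more if necessary; part~(a) is recovered by deleting the third bracket term throughout.

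The main obstacle I anticipate is the forward-perturbation estimate $\norm{\nabla u_j^\star-\nabla u_j^n}_{\Ca(\Ot)}\le C\,\norm{\tfrac{\sigs-\sigma^n}{\sigma^n}}_{C^1(\Omega)}$ \emph{with $C$ uniform in $n$}: one must cast the difference equation into precisely the form of \eqref{eq:Lemma_proof_PDE_MREITpaper}, check that its right-hand side is genuinely in $C(\Omega)$ — which relies on $\nabla^2 u_j^\star=-\nabla\ln\sigs\cdot\nabla u_j^\star\in\Ca(\Omega)$ rather than merely $L^2$ — and thread the three nested domains $\Ot\cc\Ott\cc\Omega$ so that the interior $H^2$-estimate and the interior Schauder estimate compose. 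The remaining bookkeeping — verifying that all constants ultimately depend only on the a priori bound for $\norm{\nabla\ln\sigma^n}_{C(\Omega)}$, so that one single small $\epsilon$ suffices and the argument is not circular — is routine but must be carried out with care.
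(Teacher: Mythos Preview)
Your proposal is correct and follows essentially the same route as the paper: reduce (a) to (b), use Lemma~\ref{lemma:Lemma_iteration_properties_MREITpaper}(a) to reduce the induction step to a bound on $\mcV_N^{n+1}-\nabla\ln\sigs$, split $\A_N[\sigma^n]-\A[\sigs]$ into an exact-PDE part and the quality-condition part, control $\A_N[\sigma^n]^{-1}$ by a Neumann series around $\A[\sigs]^{-1}$, and keep the constants uniform via the a~priori bound on $\norm{\nabla\ln\sigma^n}_{C(\Omega)}$ coming from Lemma~\ref{lemma:Lemma_iteration_properties_MREITpaper}(b). The only noteworthy simplification the paper makes over your sketch is in the forward-perturbation PDE: writing $e^n=\ln\sigma^n-\ln\sigs$ one gets directly $\nabla\cdot(\sigma^n\nabla w_j^n)=-\sigma^n\,\nabla e^n\cdot\nabla u_j^\star$, so the right-hand side is manifestly in $C(\Omega)$ without having to expand $\nabla^2 u_j^\star$; and the paper organizes the algebra as $(I+\A[\sigs]^{-1}W_N^n)(\mcV_N^{n+1}-\nabla\ln\sigs)=\bigl((\tfrac{\sigs}{\sigma^n}-1)I-\A[\sigs]^{-1}W_N^n\bigr)\nabla\ln\sigs$, which is algebraically equivalent to your decomposition but makes the Neumann-series step and the collection of constants into a single contraction factor $\theta<\tfrac12$ slightly more transparent.
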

\begin{proof}
\begin{enumerate}[(a)]
\item The exact forward solutions $u_j^n$ utilized in procedure \ref{proc:BZ_MREITpaper} fulfill both requirements of the second part of this theorem (the regularity stems from lemma \ref{lemma:Lemma_proof_PDE_MREITpaper} and the second requirement is trivial). Therefore, the first statement of this theorem is a consequence of the second statement.
\item We proof by induction: there exists an $\epsilon\in (0,1)$, such that if $\norm{\nabla\ln\sigs}_{\Ca(\Omega)} < \epsilon$ and both the regularity and the quality condition hold, there exists a $\theta < \frac{1}{2}$ depending on $\epsilon$, such that
\begin{align*}
\norm{\ln\sigma^n - \ln\sigs}_{\Conea(\Ot)}\leq C^\dagger \theta^n\epsilon\leq C^\dagger\left(\frac{1}{2}\right)^n\epsilon
\end{align*}
holds for $n=1,2,\dots,$ where $\epsilon$ and $\theta$ are fixed after the base case.\\
\emph{Base case ($n=0$):} 
Let $u_j^n$ and $u_j^\star$ denote the solutions of \eqref{eq:forward_problem_MREITpaper}
for $\sigma = \sigma^n$ and $\sigs$ respectively, where \eqref{eq:forward_problem_MREITpaper} was a special case of \eqref{eq:Lemma_proof_PDE_MREITpaper} with $g=0$, $\Gamma = E_j^+\cup E_j^-$ and adequately chosen $h$. Furthermore, we introduce the notation $e^0 := \ln\sigma^0 - \ln\sigs$, $w_j^0 := u_j^0 - u_j^\star$ and $w_{j,N}^0 := u_{j,N}^0 - u_j^\star$, where $w_j^0$ fulfills
\begin{subequations}
\label{eq:Convproof_PDE_MREITpaper}
\begin{align}
\nabla\cdot(\sigma^0 \nabla w_j^0) &= -\sigma^0\nabla e^0\cdot\nabla u_j^\star\quad\mbox{ in }\Omega \\
w_j^0\vert_{E_j^+} &= 0,\quad w_j^0\vert_{E_j^-} = 0\\
-\sigma^0 \nabla w_j^0\cdot \mathbf{n} = (\sigma^0-\sigs)\nabla u_j^\star\cdot\mathbf{n} &= 0 \quad\mbox{ on }\partial\Omega\backslash \overline{E_j^+\cup E_j^-}
\end{align}
\end{subequations}
since it is $\sigma^0 = \sigma_b$ and thus $\sigma^0 = \sigs$ on $\partial\Omega$.

To obtain the desired result, we want to utilize \eqref{eq:proof_normjump_MREITpaper} and derive a suitable estimate for $\norm{\mcV_N^1 - \nabla\ln\sigs}_{\Ca(\Ot)}$. 
According to procedure \ref{proc:approx_BZ_MREITpaper}, it is
\begin{align*}
\sigma^0\A_N[\sigma^0] \mcV_N^1 = \frac{1}{\mu_0}\VEC{\nabla^2 B_{z,\star}^1}{\nabla^2 B_{z,\star}^2}
\end{align*}
in $\Ot$. Introducing the notation $W_N^0 := \A_N[\sigma^0] - \A[\sigs]$ the above relation can be written as
\begin{align*}
(\sigma^0 I + \sigma^0\A[\sigs]^{-1} W_N^0)\mcV_N^1 = \frac{1}{\mu_0}\A[\sigs]^{-1}\VEC{\nabla^2 B_{z,\star}^1}{\nabla^2 B_{z,\star}^2} = \nabla\sigs,
\end{align*}
where $I\in\R^{2\times 2}$ is the identity matrix and the last equality was explained (for the logarithmic formulation of procedure \ref{proc:approx_BZ_MREITpaper}) in remark \ref{rem:Iteration_sequence_MREITpaper}.
Subtracting $(\sigma^0 I + \sigma^0\A[\sigs]^{-1} W_N^0)\nabla\ln\sigs$ on both sides and dividing by $\sigma^0$ yields
\begin{align}
\label{eq:Convproof_eq1_MREITpaper}
(I+\A[\sigs]^{-1}W_N^0)(\mcV_N^1 - \nabla\ln\sigs) = \left(\left(\frac{\sigs}{\sigma^0}-1\right)I - \A[\sigs]^{-1}W_N^0\right)\nabla\ln\sigs.
\end{align}
In order to derive the invertibility of $I+\A[\sigs]^{-1}W_N^0$ and also gain an upper bound on the matrix norm of its inverse via the Neumann series, we calculate
\begin{align*}
\norm{\A[\sigs]^{-1}W_N^0}_{\Ca(\Ot)} &\leq 2\norm{\A[\sigs]^{-1}}_{\Ca(\Ot)}\max_{j=1,2}\norm{\nabla w_{j,N}^0}_{\Ca(\Ot)}\\
&\leq  \underbrace{2\norm{\A[\sigs]^{-1}}_{\Ca(\Ot)}\max_{j=1,2}\norm{\nabla w_j^0}_{\Ca(\Ot)}}_{(*)}\\
&+ 
\underbrace{2\norm{\A[\sigs]^{-1}}_{\Ca(\Ot)}\max_{j=1,2}\norm{\nabla u_{j,N}^0 - \nabla u_j^0}_{\Ca(\Ot)}}_{(**)}.
\end{align*}
Defining $\hat{C}:= \max\{ C^\dagger,\, C^\ddagger \}\geq 1$, where $C^\dagger$ and $C^\ddagger$ have been introduced in lemma \ref{lemma:Lemma_iteration_properties_MREITpaper}, we make the initial choice of $\epsilon\in (0,\frac{1}{\hat{C}^2 +1})\cc (0,1)$.

Regarding $(**)$, we combine the quality condition $\norm{\nabla u_{j,N}^0 - \nabla  u_j^0}_{\Ca(\Ot)}\leq \frac{\epsilon}{2 C_{\A}}$ with \eqref{eq:bound_Asigs_inv_MREITpaper} to obtain
\begin{align}
\label{eq:Convproof_rb_err_MREITpaper}
2\norm{\A[\sigs]^{-1}}_{\Ca(\Ot)}\max_{j=1,2}\norm{\nabla u_{j,N}^0 - \nabla u_j^0}_{\Ca(\Ot)}
\leq 2 C_{\A} \frac{\epsilon}{2 C_{\A}} = \epsilon.
\end{align}
Regarding $(*)$, we want to derive an upper bound for $\norm{\nabla w_j^0}_{\Ca(\Ot)}$ containing $\epsilon$.
Since $w_j^0$ is a solution of \eqref{eq:Convproof_PDE_MREITpaper} which is covered by \eqref{eq:Lemma_proof_PDE_MREITpaper} with right hand side $g = \nabla e^0\cdot\nabla u_j^\star \in L^2(\Omega)$ (it is $u_j^\star\in H^1(\Omega)$ and $e^0\in \Conea(\oO)$), using \eqref{eq:Lemma_proof_est3_MREITpaper} and \eqref{eq:Lemma_proof_est4_MREITpaper} yields
\begin{align*}
\norm{w_j^0}_{H^2(\Ott)} &\leq C_1(\sigma^0)\left((\norm{\nabla e^0\cdot\nabla u_j^\star}_{L^2(\Omega)} + \norm{w_j^0}_{H^1(\Omega)}\right)\\ 
&\leq C_1(\sigma^0)(1+C_2(\sigma^0))\norm{\nabla e^0\cdot\nabla u_j^\star}_{L^2(\Omega)}\\
&\leq C_1(\sigma^0)(1+C_2(\sigma^0))\norm{\nabla e^0}_{C(\Omega)}\norm{ u_j^\star}_{H^1(\Omega)}.
\end{align*}
The Sobolev imbedding theorem \cite[thm. 4.12]{adams2003sobolev} yields the estimate $\norm{w_j^0}_{C^{0,\alpha}(\Ott)}\leq C_s\norm{w_j^0}_{H^2(\Ott)}$ with the imbedding constant $C_s$ and together with \eqref{eq:Lemma_proof_est5_MREITpaper} we obtain
\begin{align*}
&\norm{\nabla w_j^0}_{\Ca(\Ot)} \leq C_3(\sigma^0) [\norm{w_j^0}_{C^{0,\alpha}(\Ott)} + \norm{\nabla e^0\cdot \nabla u_j^\star}_{C(\Ott)}]\\
&\leq C_3(\sigma^0)[C_s \norm{ u_j^\star}_{H^1(\Omega)} C_1(\sigma^0)(1+C_2(\sigma^0)) + \norm{\nabla u_j^\star}_{C^{0,\alpha}(\Ott)}]\norm{\nabla e^0}_{C(\Omega)}.
\end{align*}
Note that \eqref{eq:Lemma_proof_est5_MREITpaper} is applicable here since $e^0\in \Conea(\oO)$ combined with $u_1^\star,\, u_2^\star\in C^{1,\alpha}(\Omega)$ it is $\nabla e^0\cdot \nabla u_j^\star\in C(\Omega)$.
At the same time $\norm{ u_j^\star}_{H^1(\Omega)}$ and $\norm{\nabla u_j^\star}_{C^{0,\alpha}(\Ott)}$ are finite and we denote by 
\begin{align*}
\tilde{G}(\sigma) := C_3(\sigma) [C_s \norm{ u_j^\star}_{H^1(\Omega)} C_1(\sigma)(1+C_2(\sigma))+\norm{\nabla u_j^\star}_{C^{0,\alpha}(\Ott)}]
\end{align*} 
a (due to lemma \ref{lemma:Lemma_proof_PDE_MREITpaper}) known function that only depends on $\norm{\nabla\ln\sigma}_{C(\Omega)}$. The expression
\begin{align*}
\sup_{\norm{\nabla\ln\sigma - \nabla\ln\sigs}_{C(\Omega)} \leq C^\dagger C^\ddagger \epsilon} \tilde{G}(\sigma)
\end{align*}
is well-defined since $\norm{\nabla\ln \sigma - \nabla\ln\sigs}_{C(\Omega)}\leq C^\dagger C^\ddagger\epsilon$ implies the boundedness of $\norm{\nabla\ln\sigma}_{C(\Omega)}$ via
\begin{align*}
\norm{\nabla\ln\sigma}_{C(\Omega)}\leq\norm{\nabla\ln\sigs}_{C(\Omega)} + \norm{\nabla\ln \sigma - \nabla\ln\sigs}_{C(\Omega)}\leq (1 + C^\dagger C^\ddagger)\epsilon \leq 1,
\end{align*}
where the last inequality stems from the initial choice of $\epsilon\in (0,\frac{1}{\hat{C}^2+1})$. Therefore, we define
\begin{align*}
\bar{G} := \sup_{\norm{\nabla\ln\sigma - \nabla\ln\sigs}_{C(\Omega)}\leq 1} \tilde{G}(\sigma),
\end{align*}
and remembering $\norm{\nabla\ln\sigs}_{\Ca(\Omega)} < \epsilon$ it is 
\begin{align*}
\norm{\nabla e^0}_{C(\Omega)} \leq \norm{\nabla \ln\sigs}_{\Ca(\Omega)} < \epsilon \leq  C^\dagger C^\ddagger \epsilon
\end{align*}
since $C^\dagger,\, C^\ddagger \geq 1$ and we assert $\tilde{G}(\sigma^0) \leq \bar{G}$. 
With the definition of $\hat{C}$, the above result yields the estimate
\begin{align}
\label{eq:Convproof_eq2_MREITpaper}
\norm{\nabla w_j^0}_{\Ca(\Ot)} \leq \tilde{G}(\sigma^0)\norm{\nabla e^0}_{C(\Omega)}
\leq \bar{G}\hat{C}^2\epsilon
\end{align}
and together with \eqref{eq:bound_Asigs_inv_MREITpaper}, we obtain
\begin{align}
\label{eq:Convproof_eq3_MREITpaper}
(*) = 2\norm{\A[\sigs]^{-1}}_{\Ca(\Ot)} \max_{j=1,2} \norm{\nabla w_j^0}_{\Ca(\Ot)} \leq 2 C_\A \bar{G}\hat{C}^2 \epsilon.
\end{align}
We refine the initial choice of $\epsilon\in(0,\frac{1}{\hat{C}^2 +1})$ and take $\epsilon$ small enough such that (with sight on \eqref{eq:Convproof_rb_err_MREITpaper}) $\max\{2C_\A \bar{G}\hat{C}^2\epsilon,\,\epsilon\} < \frac{1}{4}$. In total, it is
\begin{align*}
\norm{\A[\sigs]^{-1}W_N^0}_{\Ca(\Ot)} < \frac{1}{2}
\end{align*}
and the Neumann series for $I+\A[\sigs]^{-1}W_N^0$ is applicable. As a consequence, we obtain the following estimate which is based on \eqref{eq:Convproof_eq1_MREITpaper}, \eqref{eq:Convproof_rb_err_MREITpaper} as well as \eqref{eq:Convproof_eq3_MREITpaper}
\begin{align*}
\norm{\mcV_N^1 -\nabla\ln\sigs}&_{\Ca(\Ot)}\\
&\leq 2\left(2\norm{\frac{\sigs-\sigma^0}{\sigma^0}}_{\Ca(\Ot)}+(2 C_\A \bar{G}\hat{C}^2 +1)\epsilon\right)\norm{\nabla\ln\sigs}_{\Ca(\Ot)}.
\end{align*}
Using $\sigma^0 = \sigma_b$, we calculate
\begin{align*}
\norm{e^0}_{C(\Ot)} \leq diam(\Ot)\norm{\nabla e^0}_{C(\Ot)} = diam(\Ot)\norm{\nabla \ln\sigs}_{C(\Ot)} \leq diam(\Ot)\epsilon,
\end{align*}
such that $\norm{e^0}_{C^1(\Ot)}\leq (1+diam(\Ot))\epsilon$ and \eqref{eq:proof_somebound_MREITpaper} is applicable.
Together with \eqref{eq:proof_normjump_MREITpaper} and $\norm{\nabla\ln\sigs}_{\Ca(\Omega)} < \epsilon$, we obtain
\begin{align*}
\norm{\ln\sigma^1-\ln\sigs}_{\Conea(\Ot)}&\leq C^\dagger \norm{\mcV_N^1 - \nabla\ln\sigs}_{\Ca(\Ot)}\\
&\leq C^\dagger 2\left(2 \tilde{K} + 2 C_\A \bar{G}\hat{C}^2 +1\right)\epsilon^2,
\end{align*}
with $\tilde{K}\geq 1$ from \eqref{eq:proof_somebound_MREITpaper}.
So far, $\epsilon\in(0,\frac{1}{\hat{C}^2 +1})$ fulfills $\max\{2 C_\A \bar{G}\hat{C}^2 \epsilon,\,\epsilon\} < \frac{1}{4}$.
We finalize our choice of $\epsilon$ and chose it small enough, such that 
\begin{align*}
\theta := 2\left(2 \tilde{K} + 2 C_\A \bar{G}\hat{C}^2 +1\right)\epsilon < \frac{1}{2}
\end{align*}
and obtain
\begin{align*}
\norm{\ln\sigma^1 - \ln\sigs}_{\Conea(\Ot)}\leq C^\dagger \theta \epsilon < C^\dagger\left(\frac{1}{2}\right)\epsilon.
\end{align*}
\emph{Induction step $(n \rightarrow n+1)$:}
Let us assume that
\begin{align*}
\norm{\ln\sigma^k - \ln\sigs}_{\Conea(\Ot)} \leq C^\dagger \theta^k \epsilon < C^\dagger\left(\frac{1}{2}\right)^k \epsilon
\end{align*}
holds for $k =n$ and we want to verify the statement for $k = n+1$. We introduce the notation $e^n := \ln\sigma^n - \ln\sigs$ and the above induction hypothesis reads
\begin{align}
\label{eq:Convproof_eq4_MREITpaper}
\norm{e^n}_{\Conea(\Ot)}\leq C^\dagger\theta^n \epsilon = C^\dagger 2^n\left(2 \tilde{K} + 2 C_\A \bar{G}\hat{C}^2 +1\right)^n\epsilon^{n+1}.
\end{align}
Correspondingly, we introduce the notation $w_j^n = u_j^n - u_j^\star$ as well as $w_{j,N}^n = u_{j,N}^n - u_j^\star$ and since $\sigma^n = \sigs$ on $\partial\Omega$ due to \eqref{eq:approx_Iterationssequenz_MREITpaper}, $w_j^n$ meets
\begin{align*}
\nabla\cdot(\sigma^n \nabla w_j^n) = -\sigma^n\nabla e^n\cdot\nabla u_j^\star\quad\mbox{ in }\Omega \\
w_j^n\vert_{E_j^+} = 0,\quad w_j^n\vert_{E_j^-} = 0\\
-\sigma^n \nabla w_j^n\cdot \mathbf{n} = (\sigma^n - \sigs)\nabla u_j^\star \cdot\mathbf{n} = 0\quad\mbox{ on }\partial\Omega\backslash \overline{E_j^+\cup E_j^-}.
\end{align*}
Similar to \eqref{eq:Convproof_eq1_MREITpaper}, we obtain the equality
\begin{align*}
(I+\A[\sigs]^{-1}W_N^n)(\mcV_N^{n+1} - \nabla\ln\sigs) = \left(\left(\frac{\sigs}{\sigma^n}-1\right)I - \A[\sigs]^{-1}W_N^n\right)\nabla\ln\sigs,
\end{align*}
with $W_N^n := \A_N[\sigma^n] - \A[\sigs]$. 
As in the base case, we want to apply the Neumann series and therefore calculate
\begin{align*}
\norm{\A[\sigs]^{-1}W_N^n}&_{\Ca(\Ot)} \leq 2\norm{\A[\sigs]^{-1}}_{\Ca(\Ot)}\max_{j=1,2}\norm{\nabla w_{j,N}^n}_{\Ca(\Ot)}\\
&\leq \underbrace{2\norm{\A[\sigs]^{-1}}_{\Ca(\Ot)}\max_{j=1,2}\norm{\nabla w_j^n}_{\Ca(\Ot)}}_{(\diamond)}\\
&+ \underbrace{2\norm{\A[\sigs]^{-1}}_{\Ca(\Ot)}\max_{j=1,2}\norm{\nabla u_{j,N}^n - \nabla u_j^n}_{\Ca(\Ot)}}_{(\diamond\diamond)}.
\end{align*}
Regarding $(\diamond)$, due to the regularity condition $u_{j,N}^n\in \Conea(\Ott)$, theorem \ref{thm:Iter_Reg_MREITpaper} holds and with the regularity of $\sigma^n$ lemma \ref{lemma:Lemma_proof_PDE_MREITpaper} is applicable for $w_j^n$. Using the respective inequalities, similar to \eqref{eq:Convproof_eq2_MREITpaper}, we obtain 
\begin{align*}
\norm{\nabla w_j^n}_{\Ca(\Ot)} \leq \tilde{G}(\sigma^n)\norm{\nabla e^n}_{C(\Omega)}
\end{align*}
and \eqref{eq:proof_domainjump_MREITpaper} together with \eqref{eq:Convproof_eq4_MREITpaper} yields
\begin{align*}
\norm{\nabla e^n}_{C(\Omega)} \leq \norm{e^n}_{\Conea(\Omega)} \leq C^\ddagger \norm{e^n}_{\Conea(\Ot)} \leq C^\dagger C^\ddagger \epsilon < 1,
\end{align*}
which implies $\tilde{G}(\sigma^n)\leq\bar{G}$. In total, we obtain
\begin{align*}
\norm{\nabla w_j^n}_{C(\Ot)} 
\leq \bar{G}C^\ddagger C^\dagger \theta^n\epsilon.
\end{align*}
Using this, \eqref{eq:bound_Asigs_inv_MREITpaper} as well as $C^\dagger,\,C^\ddagger \leq \hat{C}$ and remembering the definition of $\theta$, we obtain
\begin{align*}
2\norm{\A[\sigs]^{-1}}_{\Ca(\Ot)}& \max_{j=1,2} \norm{\nabla w_j^n}_{\Ca(\Ot)}
\leq  2 C_\A \bar{G}\hat{C}^2 \theta^n\epsilon \leq \theta^{n+1} < \frac{1}{2^{n+1}} \leq \frac{1}{4}.
\end{align*}
Regarding $(\diamond\diamond)$, the quality condition $\norm{\nabla u_{j,N}^n - \nabla u_j^n}_{\Ca(\Ot)}\leq \frac{\epsilon^{n+1}}{2 C_{\A}}$ together with \eqref{eq:bound_Asigs_inv_MREITpaper} yields
\begin{align*}
2 \norm{\A[\sigs]^{-1}}_{\Ca(\Ot)}\max_{j=1,2}\norm{\nabla u_{j,N}^n - \nabla u_j^n}_{\Ca(\Ot)}
\leq 2 C_{\A} \frac{\epsilon^{n+1}}{2 C_{\A}} = \epsilon^{n+1}
\end{align*}
and we can assert ($\epsilon$ is at least smaller than $\frac{1}{4}$)
\begin{align*}
\norm{\A[\sigs]^{-1}W_N^n}_{\Ca(\Ot)} < \frac{1}{2}.
\end{align*}
Therefore, the Neumann series is once again applicable and together with the previous estimates as well as \eqref{eq:proof_somebound_MREITpaper} and the definition of $\theta$ we derive
\begin{align*}
&\norm{\mcV_N^{n+1}-\nabla\ln\sigs}_{\Ca(\Ot)}\\
&\leq 2\left(2 \tilde{K}\epsilon^{n+1} + 2\norm{\A[\sigs]^{-1}}_{\Ca(\Ot)} \max_{j=1,2}\norm{\nabla w_{j,N}^n}_{\Ca(\Ot)}\right)\norm{\nabla\ln\sigs}_{\Ca(\Ot)}\\
&\leq 2\epsilon^{n+1}\left(2 \tilde{K}+2 C_\A \bar{G}\hat{C}^2 2^n\left(2  \tilde{K} + 2 C_\A \bar{G}\hat{C}^2 +1\right)^n + 1\right)\epsilon\\
&\leq 2\epsilon^{n+1}2^n\left(2 \tilde{K} + 2 C_\A \bar{G}\hat{C}^2 +1\right)^{n+1}\epsilon = \theta^{n+1}\epsilon.
\end{align*}
Combining this with \eqref{eq:proof_normjump_MREITpaper} yields
\begin{align*}
\norm{\ln\sigma^{n+1}-\ln\sigs}_{\Conea(\Ot)}&\leq C^\dagger \norm{\mcV_N^{n+1} - \nabla\ln\sigs}_{\Ca(\Ot)}\\
&\leq C^\dagger \theta^{n+1}\epsilon < C^\dagger \left(\frac{1}{2}\right)^{n+1}\epsilon
\end{align*}
and the statement is correct for $k = n+1$.
\end{enumerate}
\end{proof}

\begin{remark}
\label{rem:conv_proof_MREITpaper}
Theorem \ref{thm:convergence_RBZ_MREITpaper} extends \cite[thm. 3.2]{Jijun_Seo_Woo_MREITerrest} in the following ways:
\begin{enumerate}[(i)]
\item The first part of theorem \ref{thm:convergence_RBZ_MREITpaper} replicates the statement of \cite[thm. 3.2]{Jijun_Seo_Woo_MREITerrest}. Do note, that as mentioned in remark \ref{rem:Iteration_sequence_MREITpaper} there is no direct correlation between either $\mcV^{n+1}$ or $\mcV_N^{n+1}$ and $\nabla\ln\sigma^{n+1}$ such that $\mcV^{n+1} = \mcV_N^{n+1} = 0$ in $\Omega\setminus\Ot$ can not be abused to generate a result on the iteration error in the background $\Omega\setminus\Ot$.
\item The second part of theorem \ref{thm:convergence_RBZ_MREITpaper} ensures actual numerical convergence of the \Bz. Remembering remark \ref{rem:Approx_Itersequence_MREITpaper}, $u_j^n$ in the quality condition is the solution of \eqref{eq:forward_problem_MREITpaper} for $\sigma = \sigma^n$ with $\sigma^n$ originating from procedure \ref{proc:approx_BZ_MREITpaper} such that the quality condition does relate to for instance a discretization error.
If the approximations $u_{j,N}^n$ are for example finite element approximations the regularity condition indicates what type of finite elements should be utilized to obtain a convergent numerical scheme and the quality condition specifies the required approximation quality, i.e. the fineness of the mesh.
\item The required bound on $\norm{\nabla\ln\sigs}_{\Ca(\Omega)}$ together with $\sigs\in\mcP$ translates to the contrast in $\sigs$ from the background $\sigma_b$ not being too large. In this sense the initial guess $\sigma^0 = \sigma_b$ can not be too far away from $\sigs$ and the convergence result acquired with theorem \ref{thm:convergence_RBZ_MREITpaper} has to be interpreted as a local convergence result.
\item Theorem \ref{thm:convergence_RBZ_MREITpaper} can be formulated for a broader class of initial guesses, possessing the properties necessary for the proof. Since the known background $\sigma_b$ is indeed the natural choice here (and fulfills the requirements), we did formulate the result with $\sigma^0 = \sigma_b$.
\end{enumerate}
\end{remark}

\section{Reduced basis methods for MREIT}
\label{sec:RBM_and_MREIT_MREITpaper}

Throughout section \ref{sec:MREIT_MREITpaper} an unspecified approximation to the forward solution $u_j^\sigma$ was used. In this section, we want to introduce a specific approximation via the reduced basis method, a model order reduction technique. Utilizing the reduced basis method we develop a novel algorithm, where it is the aim to speed-up the existing \Bz, and theorem \ref{thm:convergence_RBZ_MREITpaper} will guarantee the convergence of the new method, if the respective conditions are met. Finally, a numerical comparison of the novel algorithm and the existing one will be carried out.

\subsection{The reduced basis method}
\label{subsec:RBM_MREITpaper}

As introduced in remark \ref{rem:forward_problem_MREITpaper}, there are two different forward problems (one for each electrode configuration), to which we want to apply the reduced basis method. Since this introductory section wants to explain the basics of the reduced basis method, see, e.g., \cite{RB_master_paper, Ha14RBTut} for a detailed survey, we formulate it for only one of the two forward problems and choose w.l.o.g. $Y := H_{D_1}^1(\Omega)$ as the solution space in \eqref{eq:forward_problem_weakform_MREITpaper} and write $u$ or $u^\sigma$ instead of $u_1^\sigma$ whenever we refer to  \eqref{eq:forward_problem_weakform_MREITpaper} in this section. Of course, the method can be analogously formulated for the second solution space $H_{D_2}^1(\Omega)$.

The aim of the reduced basis method is the construction of an accurate reduced basis approximation $u_{N}^\sigma$ of $u^\sigma$, the solution of \eqref{eq:forward_problem_weakform_MREITpaper}, where $u_{N}^\sigma\in Y_N\subset Y$, the \emph{reduced basis space} with $\dim Y_N = N\in\N$ and $N\ll\infty$.
Typically $Y_N$ will consist of so called \emph{snapshots} that are solutions of \eqref{eq:forward_problem_weakform_MREITpaper} for \emph{meaningful} parameters.
We will discuss our method of constructing $Y_N$ in the upcoming section and assume its existence in this section.

\begin{definition}
\label{def:reducedprob_MREITpaper}
Given the forward problem \eqref{eq:forward_problem_weakform_MREITpaper} and a reduced basis space $Y_N \subset Y$, with $\dim Y_N = N$ and basis $\Psi_N := \{\psi_1, \dots , \psi_N\}$, we define the \emph{reduced forward problem}: for $\sigma\in \mcP$ find $u_N^\sigma\in Y_N$ the solution of
\begin{align}
\label{eq:reducedfp_MREITpaper}
b(u_N,v;\sigma) = f(v),\quad\mbox{for all}\quad v\in Y_N.
\end{align}
We call $u_N^\sigma$ the \emph{reduced basis approximation} and will often write $u_N$ instead.
\end{definition}

Since \eqref{eq:reducedfp_MREITpaper} is simply the Galerkin-projection of \eqref{eq:forward_problem_weakform_MREITpaper} onto $Y_N$, a closed subspace of $Y$, existence and uniqueness of a solution of \eqref{eq:reducedfp_MREITpaper} follow from the properties of \eqref{eq:forward_problem_weakform_MREITpaper}.

To give a better impression of \eqref{eq:reducedfp_MREITpaper} and insight on its numerical implementation, we define the \emph{discrete reduced forward problem}.

\begin{proposition}
\label{prop:discrete_RB_MREITpaper}
For a given reduced forward problem \eqref{eq:reducedfp_MREITpaper} and $\sigma\in\mcP$, we define
\begin{align*}
\mathbf{B}_N(\sigma) := \left(b(\psi_j,\psi_i;\sigma)\right)_{i,j=1}^N\in\R^{N\times N},\quad \mathbf{f}_N := \left(f(\psi_i)\right)_{i=1}^N\in\R^N.
\end{align*}
Solving the linear system 
\begin{align}
\label{eq:reduceddiscfp_MREITpaper}
\mathbf{B}_N(\sigma) \mathbf{u}_N^\sigma = \mathbf{f}_N
\end{align} 
for $\mathbf{u}_N^\sigma = \left( u_{N,i}\right)_{i=1}^N\in\R^N$, we can obtain the solution of \eqref{eq:reducedfp_MREITpaper} via $u_N^\sigma = \sum_{i=1}^N u_{N,i}\psi_i$.
\end{proposition}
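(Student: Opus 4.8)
The plan is to expand the Galerkin equation \eqref{eq:reducedfp_MREITpaper} in the basis $\Psi_N$ and show that it is equivalent to the linear algebraic system \eqref{eq:reduceddiscfp_MREITpaper}. First I would write the unknown $u_N^\sigma \in Y_N$ in coordinates: since $\Psi_N = \{\psi_1,\dots,\psi_N\}$ is a basis of $Y_N$, there exist unique coefficients $u_{N,i}\in\R$ with $u_N^\sigma = \sum_{j=1}^N u_{N,j}\psi_j$. Substituting this into \eqref{eq:reducedfp_MREITpaper} and using bilinearity of $b(\cdot,\cdot;\sigma)$ in its first argument gives, for every $v\in Y_N$,
\begin{align*}
\sum_{j=1}^N u_{N,j}\, b(\psi_j,v;\sigma) = f(v).
\end{align*}

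The key observation is that a linear functional equation holding for all $v\in Y_N$ is equivalent to it holding for each basis element $v=\psi_i$, $i=1,\dots,N$ (one direction is immediate by restriction; the converse follows by expanding an arbitrary $v$ in the basis and using linearity of both sides). Taking $v=\psi_i$ yields
\begin{align*}
\sum_{j=1}^N b(\psi_j,\psi_i;\sigma)\, u_{N,j} = f(\psi_i),\qquad i=1,\dots,N,
\end{align*}
which is precisely the $i$-th row of $\mathbf{B}_N(\sigma)\mathbf{u}_N^\sigma = \mathbf{f}_N$ with $\mathbf{B}_N(\sigma) = (b(\psi_j,\psi_i;\sigma))_{i,j=1}^N$ and $\mathbf{f}_N = (f(\psi_i))_{i=1}^N$, matching the definitions in the statement. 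Conversely, if $\mathbf{u}_N^\sigma$ solves this linear system, then reversing the computation shows $u_N^\sigma := \sum_{i=1}^N u_{N,i}\psi_i$ satisfies \eqref{eq:reducedfp_MREITpaper} for all basis test functions and hence, by linearity, for all $v\in Y_N$. Since \eqref{eq:reducedfp_MREITpaper} has a unique solution (as already noted in the excerpt, it is the Galerkin projection onto the closed subspace $Y_N$ and inherits coercivity and continuity from \eqref{eq:forward_problem_weakform_MREITpaper} via Lax--Milgram), the recovered $u_N^\sigma$ is indeed the reduced basis approximation, which completes the proof.

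There is essentially no obstacle here: the proposition is a routine reformulation and the only point requiring a word of care is the equivalence ``for all $v\in Y_N$'' $\Leftrightarrow$ ``for all $v\in\Psi_N$'', which I would state explicitly but not belabor. One could additionally remark that $\mathbf{B}_N(\sigma)$ is invertible — it is the stiffness matrix of a coercive bilinear form restricted to $Y_N$, hence positive definite when $b$ is symmetric coercive, so the linear system \eqref{eq:reduceddiscfp_MREITpaper} is uniquely solvable — but strictly speaking this already follows from the uniqueness for \eqref{eq:reducedfp_MREITpaper} together with the coordinate isomorphism $\R^N \cong Y_N$, so I would mention it only as a remark rather than carry out a separate argument.
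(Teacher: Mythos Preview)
Your proposal is correct; the paper states this proposition without proof, treating it as the standard Galerkin basis-expansion argument, which is exactly what you supply. Your remark on invertibility of $\mathbf{B}_N(\sigma)$ via coercivity is also in line with the paper's subsequent discussion of stability and conditioning.
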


Regarding the stability of \eqref{eq:reduceddiscfp_MREITpaper}, if the reduced basis $\Psi_N$ is orthonormal, it holds $\mathrm{cond}(\mathbf{B}_N(\sigma))\leq \frac{\gamma(\sigma)}{\alpha(\sigma)}$ independent of $N$, with $\alpha(\sigma)$ and $\gamma(\sigma)$ being the coercivity and continuity constants of the bilinear form $b$.

From a numerical viewpoint $\mathbf{B}_N(\sigma)$ will not be sparse, but since $N$ is usually very small, the solution of \eqref{eq:reduceddiscfp_MREITpaper} is still very cheap compared to e.g. the computation of a finite element approximation of the full forward problem \eqref{eq:forward_problem_weakform_MREITpaper}.

We formulate two simple, but important properties of the reduced basis method: the well-known rigorous error estimator for the reduced basis error, here measured in the $H^1$-norm $\norm{u - u_N}_{H^1(\Omega)}$, and the \emph{reproduction of solutions}.
\begin{lemma}
\label{lemma:RBMproperties_MREITpaper}
\begin{enumerate}[(a)]
\item For $\sigma\in \mcP$ we define the residual $r(\cdot ; \sigma)\in Y'$ via $r(v;\sigma) := f(v) - b(u_N,v;\sigma),\, v\in Y$ and let $v_r\in Y$ denote the Riesz-representative of $r(\cdot ;\sigma)$, i.e.,
\begin{align*}
\sp{v_r}{v}_{H^1(\Omega)} = r(v;\sigma),\, v\in Y,\quad \norm{v_r}_{H^1(\Omega)} = \norm{r(\cdot ; \sigma)}_{Y'}.
\end{align*}
Then, the error $u - u_N\in Y$ is bounded for all $\sigma\in \mcP$ by
\begin{align*}
\norm{u - u_N}_{H^1(\Omega)} \leq \Delta_N(\sigma) := \frac{\norm{v_r}_{H^1(\Omega)}}{\alpha(\sigma)}.
\end{align*}
\item For $\sigma\in \mcP$, let $u$, $u_N$ be solutions of \eqref{eq:forward_problem_weakform_MREITpaper} and \eqref{eq:reducedfp_MREITpaper} and $\mathbf{e}_i\in\R^N$ the $i$-th unit vector. Then the following holds
\begin{enumerate}[(i)]
\item if $u\in Y_N$ then $u_N = u$.
\item if $u = \psi_i\in \Psi_N$ then $\mathbf{u}_N = \mathbf{e}_i\in\R^N$ in proposition \ref{prop:discrete_RB_MREITpaper}.
\end{enumerate}
\end{enumerate}
\end{lemma}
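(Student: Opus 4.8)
The plan is to run the standard a posteriori / solution-reproduction argument for Galerkin-type reduced-order models: both claims follow from the \emph{Galerkin orthogonality} of the reduced problem \eqref{eq:reducedfp_MREITpaper} with respect to the full problem \eqref{eq:forward_problem_weakform_MREITpaper}, together with coercivity of $b(\cdot,\cdot;\sigma)$ with constant $\alpha(\sigma)$.

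For part~(a) I would proceed in two steps. First, since \eqref{eq:reducedfp_MREITpaper} is the Galerkin projection of \eqref{eq:forward_problem_weakform_MREITpaper} onto $Y_N$, one has $b(u-u_N,v;\sigma)=0$ for every $v\in Y_N$, and consequently the residual rewrites as a bilinear-form action on the error,
\begin{align*}
r(v;\sigma)=f(v)-b(u_N,v;\sigma)=b(u,v;\sigma)-b(u_N,v;\sigma)=b(u-u_N,v;\sigma),
\end{align*}
for every admissible test function $v$. Second, I would test this with $v=u-u_N$ and chain coercivity, the defining relation $\sp{v_r}{v}_{H^1(\Omega)}=r(v;\sigma)$ of the Riesz representative, and Cauchy--Schwarz:
\begin{align*}
\alpha(\sigma)\norm{u-u_N}_{H^1(\Omega)}^2\leq b(u-u_N,u-u_N;\sigma)=\sp{v_r}{u-u_N}_{H^1(\Omega)}\leq\norm{v_r}_{H^1(\Omega)}\,\norm{u-u_N}_{H^1(\Omega)}.
\end{align*}
Dividing by $\norm{u-u_N}_{H^1(\Omega)}$ (the bound being trivial if $u=u_N$) gives $\norm{u-u_N}_{H^1(\Omega)}\leq\norm{v_r}_{H^1(\Omega)}/\alpha(\sigma)=\Delta_N(\sigma)$.

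Part~(b) is then a direct consequence. For~(i), if $u\in Y_N$ then $u-u_N\in Y_N$ as well, so inserting $v=u-u_N$ into the orthogonality relation gives $b(u-u_N,u-u_N;\sigma)=0$ and coercivity forces $u=u_N$; equivalently this is the case $\Delta_N(\sigma)=0$ of part~(a). For~(ii), applying~(i) with $u=\psi_i$ yields $u_N=\psi_i$, and since $\Psi_N$ is a basis of $Y_N$ the coordinate vector in the expansion of proposition~\ref{prop:discrete_RB_MREITpaper} is unique, hence $\mathbf{u}_N=\mathbf{e}_i$. One may instead verify $\mathbf{B}_N(\sigma)\mathbf{e}_i=(b(\psi_i,\psi_k;\sigma))_k=(f(\psi_k))_k=\mathbf{f}_N$ directly and invoke that $\mathbf{B}_N(\sigma)$ is invertible by coercivity of $b$ on $Y_N$.

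The computations are routine; the only point I would treat carefully is the legitimacy of applying coercivity to $u-u_N$ in the second step of~(a). This is fine: $u_N\in Y_N\subset Y=H_{D_1}^1(\Omega)$, so $u-u_N$ has homogeneous Dirichlet values on $E_1^+\cup E_1^-$, and on that subspace $b(\cdot,\cdot;\sigma)$ is coercive --- a Poincar\'e--Friedrichs inequality (using that $E_1^+\cup E_1^-$ has positive surface measure) combined with $\sigma\in\mcP$ being bounded away from $0$ on the compact $\oO$. No genuine obstacle beyond this bookkeeping is expected.
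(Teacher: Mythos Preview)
Your argument is correct and is precisely the standard a~posteriori\,/\,Galerkin-orthogonality derivation; the paper itself does not spell this out but merely cites the reduced-basis references \cite{RB_master_paper} and \cite[props.\ 2.15, 2.16, 2.19]{Ha14RBTut}, which contain essentially the computation you wrote. Your final paragraph correctly addresses the one subtlety, namely that $u-u_N$ lies in the homogeneous Dirichlet subspace so that coercivity (via Poincar\'e--Friedrichs) is indeed available.
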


\begin{proof}
\begin{enumerate}[(a)]
\item See, e.g., \cite{RB_master_paper} or \cite[prop. $2.15$ \& $2.19$]{Ha14RBTut}.
\item Immediately follows from \eqref{eq:forward_problem_weakform_MREITpaper} and \eqref{eq:reducedfp_MREITpaper}, see, e.g., \cite[prop. $2.16$]{Ha14RBTut}.
\end{enumerate}
\end{proof}

We want to close this introductory section by commenting on the reduced basis method in a  numerical setting where a fully discretized forward problem (including a finite dimensional parameter space) is given instead of \eqref{eq:forward_problem_weakform_MREITpaper}.
\begin{remark}
\label{rem:RBM_MREITpaper} 
\begin{enumerate}[(i)] 
\item The discrete forward solution takes over the role of $u$ in this setting and the reduced basis solutions approximate this discrete forward solution. As a consequence, the reduced basis error does not incorporate the approximation error that is made by the discrete forward problem and the error estimator does not include this error. This is a usual occurrence in reduced basis methods and it is assumed that the discrete forward problem is chosen well enough such that its approximation error is negligible.
\item Given a discretized forward problem, the method (including the error estimator) can efficiently be implemented utilizing an offline/online decomposition, see, e.g., \cite[sec. $7.1.3$]{RB_master_paper}, \cite[sec. 2.5]{Ha14RBTut} or \cite[sec. 3.2]{G_Harrach_Haasdonk}, such that the reduced basis approximation and the error estimator can be rapidly computed and a considerable speed-up is achieved. In order to keep the length of this manuscript healthy, we chose not to explain this in detail.
\end{enumerate}
\end{remark}

\subsection{The \RBz (RBZ-Algorithm)}
\label{subsec:RBZ_MREITpaper}

Let us combine procedure \ref{proc:BZ_MREITpaper} with a suitable termination criterion as a starting point for the development of our new method. Motivated by the fixed-point discussion in remark \ref{rem:Iteration_sequence_MREITpaper} and the convergence result in theorem \ref{thm:convergence_RBZ_MREITpaper}, we choose the logarithmic iteration error as termination criterion and formulate the following \emph{\Bz}.

\begin{algorithm}
\caption{\Bz($\sigma^0 = \sigma_b,\mu_0,\varepsilon,\nabla^2 B_{z,\star}^1,\nabla^2 B_{z,\star}^2$)}
\begin{algorithmic}[1]
\label{algo:BZ_MREITpaper}
\STATE{$n = 0$}
\REPEAT
\STATE{For all $\MR\in\Omega$, calculate the vector field
\[
\mcV^{n+1}(\MR) := \begin{cases}
\frac{1}{\mu_0}\left[(\sigma^n(\MR)\A[\sigma^n](\MR))^{-1}\VEC{\nabla^2 B_{z,\star}^1(\MR)}{\nabla^2 B_{z,\star}^2(\MR)}\right],\quad \MR\in\Ott,\\
(0,0)^t,\quad \MR\in\Omega\setminus\Ott.
\end{cases}
\]
}
\STATE{Calculate $\ln\sigma^{n+1}$ as the solution of \eqref{eq:Iterationssequenz_MREITpaper}.}
\STATE{$\sigma^{n+1} := \exp(\ln\sigma^{n+1})$}
\STATE{$n=n+1$}
\UNTIL{$\norm{\ln\sigma^n - \ln\sigma^{n-1}}_{C(\Omega)} < \varepsilon$}
\RETURN{$\sigma_{BZ} = \sigma^n$}
\end{algorithmic}
\end{algorithm}

\begin{remark}
\label{rem:BZ_algo_MREITpaper}
\begin{enumerate}[(i)]
\item If $\sigs$ fulfills the requirements of theorem \ref{thm:convergence_RBZ_MREITpaper}, algorithm \ref{algo:BZ_MREITpaper} terminates. 
As mentioned in the end of section \ref{subsec:IP_MREITpaper}, lemma \ref{lemma:Lemma_iteration_properties_MREITpaper} holds for procedure \ref{proc:BZ_MREITpaper} as well.
Applying the triangle inequality and \eqref{eq:proof_domainjump_MREITpaper} yields
\begin{align*}
\norm{\ln\sigma^n - \ln\sigma^{n-1}}&_{C(\Omega)}  \leq \norm{\ln\sigma^n - \ln\sigma^{n-1}}_{\Conea(\Omega)} \\
&\leq \norm{\ln\sigma^n - \ln\sigs}_{\Conea(\Omega)} + \norm{\ln\sigma^{n-1} - \ln\sigs}_{\Conea(\Omega)}\\
&\leq C^\ddagger \left(\norm{\ln\sigma^n - \ln\sigs}_{\Conea(\Ot)} + \norm{\ln\sigma^{n-1} - \ln\sigs}_{\Conea(\Ot)}\right)
\end{align*}
and it follows from theorem \ref{thm:convergence_RBZ_MREITpaper} that the last expression goes to zero as $n$ goes to infinity.
Therefore, the chosen termination criterion is reasonable although, keeping the convergence result of theorem \ref{thm:convergence_RBZ_MREITpaper} in mind, simply running a fixed amount of repeat-loop iterations would also yield decent results.
\item Alternatively, an efficiently computable error estimator for $\norm{\ln\sigma^n - \ln\sigs}_{\Conea(\Omega)}$ could be used as termination criterion, see \cite[thm 2.1]{Jijun_Seo_Woo_MREITerrest} for a first result in that direction.
\end{enumerate}
\end{remark}

It is our intention to develop a faster version of algorithm \ref{algo:BZ_MREITpaper} involving the reduced basis method presented in section \ref{subsec:RBM_MREITpaper}, where the reconstruction with the new algorithm should retain its quality compared to algorithm \ref{algo:BZ_MREITpaper}. Using a qualitative and cheap approximative forward solution in order to speed-up the whole algorithm is intuitive, since the computationally expensive part of each iteration of algorithm \ref{algo:BZ_MREITpaper} is the computation of the two solutions of \eqref{eq:forward_problem_weakform_MREITpaper} (one per electrode configuration) involved in the matrix $\A[\sigma^n]$. One way to apply the reduced basis method would be, what we call the \emph{direct approach}:
\begin{enumerate}
\item For each forward problem \eqref{eq:forward_problem_weakform_MREITpaper} construct a \emph{global reduced basis space}, e.g. via the classical greedy-algorithm, see, e.g., \cite{veroy2003posteriori, RB_master_paper, Ha14RBTut}, where it is the aim of a global reduced basis space to provide accurate approximations for every parameter in the parameter domain (the desired accuracy is given by the quality condition in theorem \ref{thm:convergence_RBZ_MREITpaper}).
\item In each step of algorithm \ref{algo:BZ_MREITpaper}, replace the forward solutions of \eqref{eq:forward_problem_weakform_MREITpaper} by the corresponding reduced basis approximations.
\end{enumerate}
The offline/online decomposition mentioned in remark \ref{rem:RBM_MREITpaper} would guarantee the desired speed-up and as long as the quality condition in theorem \ref{thm:convergence_RBZ_MREITpaper} is fulfilled the convergence would also be guaranteed since snapshot-based reduced basis spaces inherit the regularity from the snapshots. 

This direct approach has successfully been applied to inverse problems with a low-dimensional parameter space, see, e.g., \cite{nguyenrozza2009reduced, nguyenPHDThesis, HoangDentalTissuereduced}.
In the imaging context of this article however, we want to recover high-resolution images of the unknown conductivity, such that a potential discrete parameter domain in a numerical setting would be very high-dimensional. 
This high-dimensionality limits the applicability of this direct approach since in general it is impossible to construct a well-approximating global reduced basis space for a complex high-dimensional parameter domain. We refer to \cite[rem. 2.6]{Ha14RBTut} for more details on this discussion.

To overcome this issue of dimensionality and to be able to tackle parameter spaces of arbitrary dimension, we propose an approach which aims at constructing a \emph{locally approximating} reduced basis space. We outlined this local approach for the nonlinear Landweber method in \cite{G_Harrach_Haasdonk} and want to mention that it is based on ideas developed in \cite{Druskin_Zaslavski, cuiWilcox2014datadriveninversion, lass2014PHDThesis, zahr2014progressive}.
The key idea is to simultaneously solve the inverse problem as well as adaptively enrich and therefore construct the reduced basis space:
\begin{enumerate}
\item Start with two given reduced basis spaces $Y_{N,1} = \mathrm{span}\{\Psi_{N,1}\}$ and $Y_{N,2} = \mathrm{span}\{\Psi_{N,2}\}$ (one per forward problem).
\item Project the reconstruction algorithm on this set of reduced basis spaces. In our case this leads to procedure \ref{proc:approx_BZ_MREITpaper} where $u_{1,N}^n$ and $u_{2,N}^n$ are the reduced basis approximations introduced in definition \ref{def:reducedprob_MREITpaper} for $Y_{N,1}$ and $Y_{N,2}$ respectively.
\item Run this projected algorithm until either the current iterate is accepted as a solution to the inverse problem ($\leadsto$ termination) or the approximation quality of the reduced spaces is not trusted anymore ($\leadsto$ step 4).
\item If the approximation quality of the reduced spaces was insufficient, the current iterate is utilized to generate snapshots for the enrichment of $Y_{N,1}$ and $Y_{N,2}$ ($\leadsto$ step $2$).
\end{enumerate} 
Basically, we abuse the ability of our inversion algorithm to find parameter values that approach the exact solution in order to determine \emph{relevant} parameters for which we can include the snapshots into our reduced basis spaces. Based on these ideas, we formulate the following \emph{\RBz} (RBZ).

\begin{algorithm}
\caption{RBZ($\sigma^0 = \sigma_b,\mu_0,\varepsilon_1,\varepsilon_2,\Psi_{N,1},\Psi_{N,2},\nabla^2 B_{z,\star}^1,\nabla^2 B_{z,\star}^2$)}
\begin{algorithmic}[1]
\label{algo:RBZ_MREITpaper}
\STATE{$n = 0$, $Y_{N,1} = \mathrm{span}\{\Psi_{N,1}\}$, $Y_{N,2} = \mathrm{span}\{\Psi_{N,2}\}$}
\REPEAT
\STATE{$\Psi_{N,1} = \Psi_{N,1}\cup\{u_1^n\}$, $\Psi_{N,2} = \Psi_{N,2}\cup\{u_2^n\}$}
\STATE{$Y_{N,1} = \mathrm{span}\{\Psi_{N,1}\}$, $Y_{N,2} = \mathrm{span}\{\Psi_{N,2}\}$}
\REPEAT
\STATE{For all $\MR\in\Omega$, calculate the vector field
\[
\mcV_N^{n+1}(\MR) := \begin{cases}
\frac{1}{\mu_0}\left[(\sigma^n(\MR)\A_N[\sigma^n](\MR))^{-1}\VEC{\nabla^2 B_{z,\star}^1(\MR)}{\nabla^2 B_{z,\star}^2(\MR)}\right],\quad \MR\in\Ott,\\
(0,0)^t,\quad \MR\in\Omega\setminus\Ott.
\end{cases}
\]
}
\STATE{Calculate $\ln\sigma^{n+1}$ as the solution of \eqref{eq:approx_Iterationssequenz_MREITpaper}.}
\STATE{$\sigma^{n+1} := \exp(\ln\sigma^{n+1})$}
\STATE{$n = n+1$}
\UNTIL{$\norm{\ln\sigma^n - \ln\sigma^{n-1}}_{C(\Omega)} < \varepsilon_1$} \OR {$\min_{j=1,2}\{\norm{\nabla u_{j,N}^n - \nabla u_j^n}_{\Ca(\Omega)}\} > \varepsilon_2^{n+1}$}
\UNTIL{$\norm{\ln\sigma^n - \ln\sigma^{n-1}}_{C(\Omega)} < \varepsilon_1$}
\RETURN{$\sigma_{RBZ} = \sigma^n$}
\end{algorithmic}
\end{algorithm}

\begin{remark}
\label{rem:RBZ_MREITpaper}
\begin{enumerate}[(i)]
\item The initial reduced bases $\Psi_{N,1}$ and $\Psi_{N,2}$ in algorithm \ref{algo:RBZ_MREITpaper} can be empty since they are directly enriched with the snapshots for the initial guess $\sigma^0$. Furthermore, $\Psi_{N,1},\,\Psi_{N,2}$ are always orthonormalized to ensure numerical stability.
\item If $\sigs$ fulfills the requirements of theorem \ref{thm:convergence_RBZ_MREITpaper}, algorithm \ref{algo:RBZ_MREITpaper} terminates by an argument similar to remark \ref{rem:BZ_algo_MREITpaper}, where $\varepsilon_1$ and $\varepsilon_2$ have to be chosen accordingly.
\item The formulation in algorithm \ref{algo:RBZ_MREITpaper} is tailored around the convergence result of theorem \ref{thm:convergence_RBZ_MREITpaper} and is not suitable for a numerical implementation: on the one hand it is difficult to handle the h\"older-norms numerically and on the other hand the criterion $\min_{j=1,2}\{\norm{\nabla u_{j,N}^n - \nabla u_j^n}_{\Ca(\Ot)}\} > \varepsilon_2^{n+1}$, although ensuring the quality condition of theorem \ref{thm:convergence_RBZ_MREITpaper} if $\varepsilon_2$ is chosen appropriately, is not a reasonable criterion from a reduced basis point of view. In order to check the criterion, one would have to compute the (computationally expensive) exact forward solutions $u_j^n$, which would defeat the purpose of a model order reduction approach.
\item Therefore, we will explain in the upcoming section the simplified version of the algorithm (utilizing the error estimator introduced in lemma \ref{lemma:RBMproperties_MREITpaper}) that was used for the numerical experiments.
\end{enumerate}
\end{remark}

\subsection{Numerical Experiments}
\label{subsec:Numerics_MREITpaper}

It is our intention to perform a short numerical comparison of algorithms \ref{algo:BZ_MREITpaper} \& \ref{algo:RBZ_MREITpaper}, including one reconstruction from exact data and one reconstruction from noisy data, where the reconstruction quality as well as the computational time will be compared. It is not our intention to verify the theoretical results from theorem \ref{thm:convergence_RBZ_MREITpaper}, such that the numerical setting below does not claim to be consistent with the requirements of the theory developed in section \ref{sec:MREIT_MREITpaper}.

The to be reconstructed conductivity is a piecewise linear approximation of the shepp-logan-phantom with $260\times260$ pixels and $1$ added to the grayscale values to ensure coercivity. It is visualized in the top right of figure \ref{fig:compare_reconstructions_MREITpaper}, where in the top left the initial value $\sigma^0 = \sigma_b = 1$ can be seen.
To match the phantom, we choose $\Omega := [-1,1]^2$ and $\Ott := \{(x,y)\mid \sqrt{x^2+y^2} < 0.95 \}\cc\Omega$ with electrode pairs $E_1^\pm := \{(\pm 1,y)\mid \vert y \vert < 0.1\}$, $E_2^\pm := \{(x,\pm 1)\mid \vert x \vert < 0.1\}$.
For simplicity, it is $\mu_0 = 1$ and the scaling introduced in lemma \ref{lemma:fp_MREITpaper} is not performed.
The PDEs \eqref{eq:forward_problem_weakform_MREITpaper}, \eqref{eq:Iterationssequenz_MREITpaper} and \eqref{eq:approx_Iterationssequenz_MREITpaper} are discretized on a triangular mesh with $135200$ elements and piecewise linear finite elements are utilized for all three PDEs. As a result, the data $\nabla^2 B_{z,\star}^1,\,\nabla^2 B_{z,\star}^2$, which is generated synthetically via \eqref{eq:IP_data_MREITpaper} where Comsol\textsuperscript{\textregistered} is used for the involved PDE solutions to prevent inverse crime, is piecewise constant on the grid. 
The noisy data set in this comparison is generated by (triangle-wise) adding $10\%$ relative gaussian noise to $\nabla^2 B_{z,\star}^1,\,\nabla^2 B_{z,\star}^2$ (wherever $\nabla^2 B_{z,\star}^1,\,\nabla^2 B_{z,\star}^2$ is equal to zero, the average absolute value of $\nabla^2 B_{z,\star}^1$ or $\nabla^2 B_{z,\star}^2$ is taken as the reference value for the gaussian relative noise). We want to emphasize that although testing the reconstruction algorithms for robustness this circumvents the problem described in remark \ref{rem:Iteration_sequence_MREITpaper} that occurs when differentiating actual noisy $B_z$ data (e.g. real-world measurements).
In order to ensure the approximation quality of the reduced basis spaces in algorithm \ref{algo:RBZ_MREITpaper}, $\min_{j=1,2}\{\norm{\nabla u_{j,N}^n - \nabla u_j^n}_{\Ca(\Omega)}\} > \varepsilon_2^{n+1}$ is replaced by $\min_{j=1,2}\left\{\Delta_{1,N}(\sigma^n),\, \Delta_{2,N}(\sigma^n)\right\} > \varepsilon_2$, where $\Delta_{j,N}(\sigma^n)$ was the rigorous reduced basis error estimator introduced in lemma \ref{lemma:RBMproperties_MREITpaper} with $j$ indicating the underlying reduced basis space $Y_{N,1}$ or $Y_{N,2}$.
This termination criterion is computationally cheap to evaluate and the error estimator in the $H^1(\Omega)$ norm should contain some derivative information.
Finally, it is $\varepsilon = \varepsilon_1 = 10^{-6}$ as the acceptance tolerance in algorithms \ref{algo:BZ_MREITpaper} \& \ref{algo:RBZ_MREITpaper} and $\varepsilon_2 = 10^{-3}$ for the new termination criterion.
The numerical experiment is performed using  Matlab\textsuperscript{\textregistered} in conjunction with the libraries RBmatlab and KerMor which both can be found online\footnote{http://www.ians.uni-stuttgart.de/MoRePaS/software/}.

\begin{figure}[ht]
\centering\includegraphics[width=\textwidth]{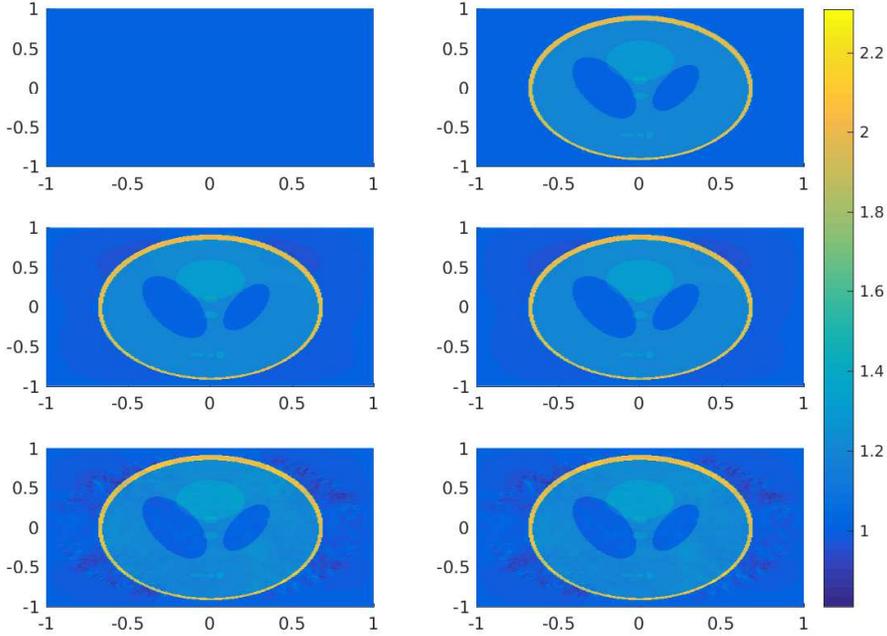}
\caption{From top left to bottom right: $\sigma^0$ the initial guess, $\sigs$ the true conductivity, $\sigma_{BZ}$ the reconstruction via algorithm \ref{algo:BZ_MREITpaper}, $\sigma_{RBZ}$ the reconstruction via algorithm \ref{algo:RBZ_MREITpaper}, $\sigma_{BZ}^\delta$ the reconstruction from noisy data via algorithm \ref{algo:BZ_MREITpaper}, $\sigma_{RBZ}^\delta$ the reconstruction from noisy data via algorithm \ref{algo:RBZ_MREITpaper}.}
\label{fig:compare_reconstructions_MREITpaper}
\end{figure}

As it can be seen in figure \ref{fig:compare_reconstructions_MREITpaper}, all key features of the shepp-logan phantom are captured in the reconstructions using exact data via algorithm \ref{algo:BZ_MREITpaper} (center-left) and algorithm \ref{algo:RBZ_MREITpaper} (center-right) which can visually not be distinguished. This is further reflected via 
\begin{align*}
\frac{\norm{\sigs - \sigma_{BZ}}_{C(\Omega)}}{\norm{\sigma_{BZ}}_{C(\Omega)}} \approx 0.092 
\quad\mbox{and}\quad 
\frac{\norm{\sigma_{RBZ} - \sigma_{BZ}}_{C(\Omega)}}{\norm{\sigma_{BZ}}_{C(\Omega)}} \approx 5.95 \cdot 10^{-4}
\end{align*}
such that a high-resolution image of the conductivity can be obtained using any of the two algorithms. Furthermore, it can be seen that the background is not exactly reconstructed, which strengthens the statement made in remark \ref{rem:conv_proof_MREITpaper}.

Regarding the computational effort of the algorithms, we note that both required the same amount of $14$ iterations (updates of the conductivity) resulting in $28$ solutions of \eqref{eq:forward_problem_weakform_MREITpaper} for algorithm \ref{algo:BZ_MREITpaper}. Algorithm \ref{algo:RBZ_MREITpaper} did update its reduced basis spaces $4$ times resulting in only $8$ solutions of \eqref{eq:forward_problem_weakform_MREITpaper}. The total computational time was $9.84$ seconds for algorithm \ref{algo:BZ_MREITpaper} and $7.61$ seconds for algorithm \ref{algo:RBZ_MREITpaper} resulting in a speed-up of roughly $25\%$. Do note that both algorithms performing $14$ updates of the conductivity have to solve the related PDEs \eqref{eq:Iterationssequenz_MREITpaper} and \eqref{eq:approx_Iterationssequenz_MREITpaper} $14$ times. In our reduced basis approach the PDE \eqref{eq:approx_Iterationssequenz_MREITpaper} remains untouched and one could introduce a third reduce basis space to include this PDE in the adaptive space enrichment procedure as well. Although this should result in further speed-up, the theoretical foundation via theorem \ref{thm:convergence_RBZ_MREITpaper} would then be lost.

Having a look at $\sigma_{BZ}^\delta$ (bottom left of figure \ref{fig:compare_reconstructions_MREITpaper}) and $\sigma_{RBZ}^\delta$ (bottom right of figure \ref{fig:compare_reconstructions_MREITpaper}), the reconstructions via algorithms \ref{algo:BZ_MREITpaper} \& \ref{algo:RBZ_MREITpaper} using the noisy data set, we observe that the key features of the phantom remain intact and note that
\begin{align*}
\frac{\norm{\sigs - \sigma_{BZ}^\delta}_{C(\Omega)}}{\norm{\sigma_{BZ}^\delta}_{C(\Omega)}} \approx 0.13
\quad\mbox{and}\quad 
\frac{\norm{\sigma_{RBZ}^\delta - \sigma_{BZ}^\delta}_{C(\Omega)}}{\norm{\sigma_{BZ}^\delta}_{C(\Omega)}} \approx 9.12 \cdot 10^{-4}.
\end{align*}

The computational effort in this noisy scenario and the speed-up obtained was basically the same as in the noiseless case such that we omit the exact numbers.

\section{Conclusion}
\label{sec:conclusion_MREITpaper}
The $B_z$-based Magnet resonance electrical impedance tomography problem can be solved using the existing Harmonic $B_z$ algorithm. The convergence theory for the algorithm in the two-dimensional setting was extended to include the case when an approximative forward solution of the underlying partial differential equation is used instead of the exact forward solution. This novel result ensures actual numerical convergence of the algorithm and enables the combination of it with innovative numerical methods. The reduced basis method, a model order reduction technique, was presented and a reduced basis version of the \Bz was developed in order to speed-up the algorithm. In a numerical example (including noisy data) a high-resolution image of the shepp-logan phantom was reconstructed. Both algorithms achieved a satisfactory approximation quality and the novel \RBz achieved a speed-up of around $25\%$.

\bibliographystyle{siamplain}
\bibliography{Generalbib}

\begin{thebibliography}{10}

\bibitem{adams2003sobolev}
{\sc R.~Adams and J.~Fournier}, {\em Sobolev Spaces}, Pure and Applied
  Mathematics, Elsevier Science, 2003.

\bibitem{adler2011electrical}
{\sc A.~Adler, R.~Gaburro, and W.~Lionheart}, {\em Electrical impedance
  tomography}, in Handbook of Mathematical Methods in Imaging, Springer, 2011,
  pp.~599--654.

\bibitem{Alessandrini_Rosset_2004}
{\sc G.~Alessandrini and E.~Rosset}, {\em {V}olume bounds of inclusions from
  physical {EIT} measurements}, Inverse Problems, 20 (2004), p.~575,
  \url{http://stacks.iop.org/0266-5611/20/i=2/a=017}.

\bibitem{cheney_isaacson_newell_1999EIT}
{\sc M.~Cheney, D.~Isaacson, and J.~C. Newell}, {\em Electrical impedance
  tomography}, SIAM review, 41 (1999), pp.~85--101.

\bibitem{cuiWilcox2014datadriveninversion}
{\sc T.~Cui, Y.~M. Marzouk, and K.~E. Willcox}, {\em Data-driven model
  reduction for the bayesian solution of inverse problems}, International
  Journal for Numerical Methods in Engineering, 102 (2015), pp.~966--990.

\bibitem{Druskin_Zaslavski}
{\sc V.~Druskin and M.~Zaslavsky}, {\em On combining model reduction and
  {G}auss-{N}ewton algorithms for inverse partial differential equation
  problems}, Inverse Problems, 23 (2007), p.~1599,
  \url{http://stacks.iop.org/0266-5611/23/i=4/a=013}.

\bibitem{G_Harrach_Haasdonk}
{\sc D.~Garmatter, B.~Haasdonk, and B.~Harrach}, {\em A {R}educed {B}asis
  {L}andweber method for nonlinear inverse problems}, Inverse Problems, 32
  (2016), p.~035001, \url{http://stacks.iop.org/0266-5611/32/i=3/a=035001}.

\bibitem{gilbarg_trudinger_PDEs}
{\sc D.~Gilbarg and N.~S. Trudinger}, {\em Elliptic partial differential
  equations of second order}, springer, 1998.

\bibitem{Ha14RBTut}
{\sc B.~Haasdonk}, {\em Reduced basis methods for parametrized {PDE}s -- a
  tutorial introduction for stationary and instationary problems}, tech.
  report, IANS, University of Stuttgart, 2014,
  \url{http://www.simtech.uni-stuttgart.de/publikationen/prints.php?ID=938}.
\newblock Chapter to appear in P. Benner, A. Cohen, M. Ohlberger and K.
  Willcox: "Model Reduction and Approximation for Complex Systems", Springer.

\bibitem{HoangDentalTissuereduced}
{\sc K.~C. Hoang, B.~C. Khoo, G.~R. Liu, N.~C. Nguyen, and A.~T. Patera}, {\em
  Rapid identification of material properties of the interface tissue in dental
  implant systems using reduced basis method}, Inverse Problems in Science and
  Engineering, 21 (2013), pp.~1310--1334,
  \url{https://doi.org/10.1080/17415977.2012.757315},
  \url{http://dx.doi.org/10.1080/17415977.2012.757315},
  \url{https://arxiv.org/abs/http://dx.doi.org/10.1080/17415977.2012.757315}.

\bibitem{Kwon_Seo_Woo2006_2dMREIT_uniq}
{\sc O.~Kwon, H.~Pyo, J.~K. Seo, and E.~J. Woo}, {\em Mathematical framework
  for bz-based mreit model in electrical impedance imaging}, Computers \&
  Mathematics with Applications, 51 (2006), pp.~817--828.

\bibitem{lass2014PHDThesis}
{\sc O.~Lass}, {\em Reduced order modeling and parameter identification for
  coupled nonlinear PDE systems}, PhD thesis, Universit{\"a}t Konstanz, 2014.

\bibitem{lionheart2004eit}
{\sc W.~R. Lionheart}, {\em {EIT} reconstruction algorithms: pitfalls,
  challenges and recent developments}, Physiological measurement, 25 (2004),
  p.~125.

\bibitem{Jijun_Seo_Woo_MREITconv}
{\sc J.~Liu, J.~K. Seo, M.~Sini, and E.~J. Woo}, {\em On the convergence of the
  harmonic {B}$\_$z algorithm in magnetic resonance electrical impedance
  tomography}, SIAM Journal on Applied Mathematics, 67 (2007), pp.~1259--1282.

\bibitem{Jijun_Seo_Woo_MREITerrest}
{\sc J.~Liu, J.~K. Seo, and E.~J. Woo}, {\em A {P}osteriori {E}rror {E}stimate
  and {C}onvergence {A}nalysis for {C}onductivity {I}mage {R}econstruction in
  {MREIT}}, {SIAM} Journal of Applied Mathematics, 70 (2010), pp.~2883--2903,
  \url{http://dx.doi.org/10.1137/090781292}.

\bibitem{JJliu2011noisy_MREIT_recon}
{\sc J.~Liu and H.~Xu}, {\em Reconstruction of biologic tissue conductivity
  from noisy magnetic field by integral equation method}, Applied Mathematics
  and Computation, 218 (2011), pp.~2647--2660.

\bibitem{newell1988electric}
{\sc J.~Newell, D.~G. Gisser, and D.~Isaacson}, {\em An electric current
  tomograph}, IEEE Transactions on Biomedical Engineering, 35 (1988),
  pp.~828--833.

\bibitem{nguyenPHDThesis}
{\sc N.~C. Nguyen}, {\em Reduced-basis approximation and a posteriori error
  bounds for nonaffine and nonlinear partial differential equations:
  Application to inverse analysis}, PhD thesis, Singapore-MIT Alliance,
  National University of Singapore, 2005.

\bibitem{nguyenrozza2009reduced}
{\sc N.~C. Nguyen, G.~Rozza, D.~Huynh, and A.~T. Patera}, {\em Reduced basis
  approximation and a posteriori error estimation for parametrized parabolic
  {PDE}s; application to real-time {B}ayesian parameter estimation}, Biegler,
  Biros, Ghattas, Heinkenschloss, Keyes, Mallick, Tenorio, van Bloemen
  Waanders, and Willcox, editors, Computational Methods for Large Scale Inverse
  Problems and Uncertainty Quantification, John Wiley \& Sons, UK,  (2009).

\bibitem{HarmonicBz_2003b}
{\sc S.~H. Oh, B.~I. Lee, E.~J. Woo, S.~Y. Lee, M.~H. Cho, O.~Kwon, and J.~K.
  Seo}, {\em Conductivity and current density image reconstruction using
  harmonic {B}$\_$z algorithm in magnetic resonance electrical impedance
  tomography}, Physics in Medicine and Biology, 48 (2003), p.~3101,
  \url{http://stacks.iop.org/0031-9155/48/i=19/a=001}.

\bibitem{RB_master_paper}
{\sc G.~Rozza, D.~Huynh, and A.~T. Patera}, {\em Reduced basis approximation
  and a posteriori error estimation for affinely parametrized elliptic coercive
  partial differential equations}, Archives of Computational Methods in
  Engineering, 15 (2008), pp.~229--275.

\bibitem{Sadleir_etal_MREIT_noise_estimation}
{\sc R.~Sadleir, S.~Grant, S.~U. Zhang, B.~I. Lee, H.~C. Pyo, S.~H. Oh,
  C.~Park, E.~J. Woo, S.~Y. Lee, O.~Kwon, et~al.}, {\em Noise analysis in
  magnetic resonance electrical impedance tomography at 3 and 11 t field
  strengths}, Physiological measurement, 26 (2005), p.~875.

\bibitem{seo_woo_2011_noniterativeBz}
{\sc J.~K. Seo, K.~Jeon, C.-O. Lee, and E.~J. Woo}, {\em Non-iterative harmonic
  bz algorithm in mreit}, Inverse Problems, 27 (2011), p.~085003.

\bibitem{seo_woo_2010_MREITcompilation}
{\sc J.~K. Seo and E.~J. Woo}, {\em Magnetic resonance electrical impedance
  tomography ({MREIT})}, SIAM review, 53 (2011), pp.~40--68.

\bibitem{HarmonicBz_mainpaper}
{\sc J.~K. Seo, J.-R. Yoon, E.~J. Woo, and O.~Kwon}, {\em Reconstruction of
  conductivity and current density images using only one component of magnetic
  field measurements}, IEEE Transactions on Biomedical Engineering, 50 (2003),
  pp.~1121--1124.

\bibitem{somersalo_cheney_isaacson_1992ExEindElectrodemodels}
{\sc E.~Somersalo, M.~Cheney, and D.~Isaacson}, {\em Existence and uniqueness
  for electrode models for electric current computed tomography}, SIAM Journal
  on Applied Mathematics, 52 (1992), pp.~1023--1040.

\bibitem{uhlmann2009electrical}
{\sc G.~Uhlmann}, {\em Electrical impedance tomography and calder{\'o}n's
  problem}, Inverse problems, 25 (2009), p.~123011.

\bibitem{veroy2003posteriori}
{\sc K.~Veroy, C.~Prud'homme, D.~V. Rovas, and A.~T. Patera}, {\em A posteriori
  error bounds for reduced-basis approximation of parametrized noncoercive and
  nonlinear elliptic partial differential equations}, in Proceedings of the
  16th AIAA computational fluid dynamics conference, vol.~3847, 2003,
  pp.~23--26.

\bibitem{seo_woo_2008_MREIT}
{\sc E.~J. Woo and J.~K. Seo}, {\em Magnetic resonance electrical impedance
  tomography ({MREIT}) for high-resolution conductivity imaging}, Physiological
  measurement, 29 (2008), p.~R1.

\bibitem{zahr2014progressive}
{\sc M.~J. Zahr and C.~Farhat}, {\em Progressive construction of a parametric
  reduced-order model for {PDE}-constrained optimization}, International
  Journal for Numerical Methods in Engineering, 102 (2015), pp.~1111--1135,
  \url{https://doi.org/10.1002/nme.4770},
  \url{http://dx.doi.org/10.1002/nme.4770}.

\end{thebibliography}
\end{document}